\newtheorem{theorem}{Theorem}[section]
\newtheorem{lemma}[theorem]{Lemma}
\theoremstyle{definition}
\newtheorem{definition}[theorem]{Definition}
\title
[A moving boundary problem for swelling]
{Local weak solvability of a moving boundary problem describing swelling along a halfline}
\subjclass[2010]{Primary: 35R35; Secondary: 35B45, 35K61.}
 \keywords{Moving boundary problem, a priori estimates, Nonlinear initial-boundary value problems for nonlinear parabolic equations}
 \email{k.kumazaki@nagasaki-u.ac.jp}
 \email{adrian.muntean@kau.se}
\date{}
\begin{document}
\maketitle

\centerline{\scshape Kota Kumazaki}
\medskip
{\footnotesize
 % please put the address of the second  and third author
 \centerline{Nagasaki University}
   \centerline{Department of Education}, 
   \centerline{1-14, Bunkyo-cho, Nagasaki, 852-8521, Japan}
}

\medskip

% Enter the first author's name and address:
\centerline{\scshape Adrian Muntean}
\medskip
{\footnotesize
% please put the address of the first author
 \centerline{Karlstad University}
   \centerline{Department of Mathematics and Computer Science}
   \centerline{Universitetsgatan 2, 651 88 Karlstad, Sweden}
} % Do not forget to end the {\footnotesize by the sign }

\medskip

%\bigskip
%
%% The name of the associate editor will be entered by an editorial staff
%% "Communicated by the associate editor name" is not needed for special issue.
% \centerline{(Communicated by the associate editor name)}

%The abstract of your paper
\begin{abstract}
We obtain the local well-posedness of a moving boundary problem that describes the swelling of a pocket of water  within an infinitely thin elongated pore (i.e. on $[a,+\infty), \ a>0$).  Our result involves fine {\em a priori} estimates of the moving boundary evolution, Banach fixed point arguments as well as an application of the general theory of evolution equations governed by subdifferentials.
\end{abstract}

%The title of your section 1
\section{Introduction}
\label{intro}

We wish to understand which effect the water-triggered micro-swelling of pores can have at observable scales of concrete-based materials. Such topic is especially relevant in cold regions, where buildings exposed to extremely low temperatures undergo freezing and build microscopic ice lenses that ultimately lead to the mechanical damage of the material; see, for instance,  \cite{Setzer}.  One way to tackle this issue from a theoretical point of view is to get a better picture of the transport of moisture.  Our long-term goal is to build a macro-micro model for moisture transport suitable for cementitious mixtures, where at the macroscopic scale the transport of moisture follows a porous-media-like equation, while at the microscopic scale the moisture is involved in an adsorption-desorption process leading to a strong local swelling of the pores.  Such a perspective would lead to a system of partial differential equations with distributed microstructures, see \cite{FMA,FT} for related settings.  In this paper, we propose a one-dimensional microscopic problem posed on a halfline with a moving boundary at one of the ends. The moving boundary conditions encode the swelling mechanism, while a diffusion equation is responsible to providing water content for the swelling to take place.

%Concrete materials has a infinite pore at microscopic level and the liquid of water exists inside of its pore and swell gradually by the effects of the moisture content which distributes in the entire material at macroscopic scale. In this paper, as the first investigation of this process,  we propose a free boundary problem in one pore as a mathematical model of the swelling process in porus materials and analyze this problem mathematically. 
%Here, we state the physical background of our free boundary problem. In this research we simpllfy one pore as a one dimensional half line. 

Since we are interested in how far the water content can actually push the {\em a priori unknown} moving boundary of swelling,  we assume that pore depth is infinite although the actual physical length is finite. Our target here is to show the well-posedness of the model. 

Let us now describe briefly  the setting of our equations. The timespan is $[0,T]$ while the pore is $[a,+\infty)$, with $a, T\in (0,+\infty)$. The variables are $t\in [0,T]$ and $z\in [a,+\infty)$. 
The boundary $z=a$ denotes the edge of the pore in contact with wetness.  The interval $[a, s(t)]$ indicates the region of diffusion of the  water content $u(t)$, where $s(t)$ is the moving interface of the water region.  The function $u(t)$ acts in the non-cylindrical region $Q_s(T)$ defined by
\begin{align*}
& Q_s(T):=\{(t, z) | 0<t<T, \ a<z<s(t) \}. 
\end{align*}

Our free boundary problem, which we denote by $(\mbox{P})_{u_0, s_0, h}$, reads: 

Find the pair $(u(t,z),s(t))$ satisfying
\begin{align*}
& u_t-ku_{zz}=0 \mbox{ for }(t, z)\in Q_s(T), \tag{1.1}\\
& -ku_z(t, a)=\beta(h(t)-Hu(t, a)) \mbox{ for }t\in(0, T), \tag{1.2}\\
& -ku_z(t, s(t))=u(t, s(t))s_t(t) \mbox{ for }t\in (0, T), \tag{1.3}\\
& s_t(t)=a_0(u(t, s(t))-\varphi(s(t))) \mbox{ for }t\in (0, T), \tag{1.4}\\
& s(0)=s_0, u(0, z)=u_0(z) \mbox{ for }z \in [a, s_0]. \tag{1.5}
\end{align*}
Here $k$ is a diffusion constant, $\beta$ is a given adsorption function on $\mathbb{R}$ that is equal to 0 for negative input and takes a positive value for positive input, $h$ is a given moisture threshold function on $[0, T]$,  $H$ and $a_0$ are further given (positive) constants, $\varphi$ is our swelling function defined on $\mathbb{R}$, while  $s_0$ and $u_0$ are the initial data. 

From the physical perspective, (1.1) is the diffusion equation displacing $u$ in the unknown region $[a, s]$; the boundary condition (1.2),  imposed at $z=a$,  implies that the moisture content $h$ inflows if $h$ is present  at $z=a$ in a larger amount than $u$. The boundary condition (1.3) at $z=s(t)$ describes the mass conservation at the moving boundary. Indeed, if the flux $u_z(t, a)$ at $z=a$ is active on the time interval $[t, t+\Delta t]$ for $t>0$, namely, $s_t(t)>0$, then, it holds that 
\begin{align*}
& \int_a^{s(t)} u(t, z) dz -ku_z(t, a) \Delta t =\int_a^{s(t+\Delta t)} u(t+\Delta t, z)dz. 
\end{align*}
Hence, by dividing $\Delta t$ in both side and letting $\Delta t \to 0$ we formally obtain that 
\begin{align*}
& -ku_z(t, a) = \int_a^{s(t)} u_t(t, z) dz + s_t u(t, s(t)). 
\end{align*}
By $u_t=ku_{zz}$ in (1.1), we derive that 
\begin{align*}
-ku_z(t, a) & = \int_a^{s(t)} ku_{zz}(t, z)dz + s_t u(t, s(t)) \\
& = ku_z(t, s(t))-ku_z(t, a)+s_tu(t, s(t)). 
\end{align*}
This formal argument motivates the structure of the moving boundary condition (1.3). The ordinary differential equation (1.4) describes the growth rate of the free boundary $s$ and it is determined by the balance between the water content $u(t, s(t))$ at $z=s(t)$ and the swelling expression $\varphi(s(t))$.  It is worth mentioning at this stage that the function $\varphi(s(t))$ limits the growth of the moving boundary.
% that is, (1.4) is the formulation involved with the breaking mechanism for the free boundary. 

From the mathematical point of view, our free boundary problem resembles remotely the classical one phase Stefan problem and its variations for handling superheating, phase transitions, evaporation; compare  \cite{Evaporation, disso-precipi, Helmig, Xie} and references cited therein. Our work contributes to the existing mathematical modeling work of swelling by Fasano and collaborators (see \cite{FMP,FM}, e.g.)  as well as other authors cf. e.g. \cite{Z}. 
 %and for phase transitions \cite{} and for evaporation \cite{Evaporation} and for crystal dissolution and precipitation \cite{disso-precipi} and reference therein. 
 The main difference between these papers and our formulation lies in the choice of the boundary conditions (1.2) and (1.3). Most of the cited settings  impose an homogeneous Dirichlet boundary condition at one of the boundaries, while  we  impose flux boundary conditions at both boundaries. Relation (1.2) will be used in a forthcoming work to connect the microscopic moving boundary discussed here to a macroscopic transport equation. %is the condition described the bidirectional connection between the macroscopic function $h$ and the microscopic function $u$, and this is a significant feature of our free boundary problem to emphasize. From this point, our problem differs from one phase Stefan problem in the above. 
 
 It is worth mentioning that the literature contains already a number of free boundary problems posed for the corrosion of  porous materials. We review here the closest contributions to our setting. For instance,  we refer to Muntean and B$\ddot{\mbox{o}}$hm \cite{MB}  who proposed a well-posed free boundary problem as mathematical model for the concrete carbonation process in one space dimension; Aiki and Muntean \cite{AM, AM1, AM2} proved the existence and uniqueness of a solution for a simplified Muntean-B$\ddot{\mbox{o}}$hm-model and obtained the large-time behavior of the free boundary as $t \to \infty$. Also, in \cite{AMSS, SAMS},  Sato et al. proposed a free boundary problem as a mathematical model of single pore adsorption, a setting very close to ours, and showed the existence of a solution locally in time;   Aiki and Murase guaranteed in \cite{AM} the existence of a solution globally in time and established the large time behaviour of this solution. Recently, based on the results of Sato et al. \cite{SAMS} and Aiki and Murase \cite{AI-Mur},  Kumazaki et al.  proposed in \cite{KASM} a multiscale model of moisture transport with adsorption, coupling in a particular fashion a macroscopic diffusion equation with the microscopic picture of the model proposed  by Sato et al.  in  \cite{SAMS} and ensured  the local existence of a solution of this two-scale  problem. We refer the reader to \cite{FMA, FT, MN} and references cited therein for comprehensive descriptions of modeling, mathematical analysis and numerical approximation of  reaction-diffusion systems posed on multiple space scales in the absence of free or moving boundaries.
 
% In future, by using the concept of multiscale used in \cite{FMA, FT, MN} we consider a multiscale model coupled with a diffution equaiton of moisture in the entire matreial and a free boudnary problem describing the swelling process occured in infinite pore.

The paper is organized as follows: In Section 2, we state the used notation and assumptions as well as  our main theorem concerning the existence and uniqueness of a solution for the moving boundary problem. In Section 3, we consider an auxiliary problem focused on finding $u$ for given $s$ and prove the existence of a solution of this problem by relying on the abstract theory of evolution equations governed by time-dependent subdifferentials. By using the result of Section 4, we finally prove our main theorem by suitably applying  Banach's fixed point theorem and the maximum principle. 

\section{Notation and assumptions}
\label{na}

In this framework, we use the following basic notations. We denote by $| \cdot |_X$ the norm for a Banach space $X$. The norm and the inner product of a Hilbert space $H$ are
 denoted by $|\cdot |_H$ and $( \cdot, \cdot )_H$, respectively. Particularly, for $\Omega \subset {\mathbb R}$, we use the standard notation of the usual Hilbert spaces $L^2(\Omega)$, $H^1(\Omega)$ and $H^2(\Omega)$.

Throughout this paper, we assume the following restrictions on the model parameters and functions:

(A1) $a$, $a_0$, $H$, $k$ and $T$ are positive constants. 

(A2) $h\in W^{1,2}(0, T)\cap L^{\infty}(0, T)$ with $h\geq 0 $ on $(0, T)$. 

(A3) $\beta\in C^1(\mathbb{R)}\cap W^{1, \infty}(\mathbb{R})$ such that $\beta=0$ on $(-\infty, 0]$ and $\beta$ is bounded and $\beta' \geq 0$ on $\mathbb{R}$. Also, we put $c_{\beta}=\mbox{sup}_{r\in \mathbb{R}}\beta(r) + \mbox{sup}_{r\in \mathbb{R}}\beta'(r)$. 

%(A4) $\mu =\mu(r_1, r_2) \in C^1(\mathbb{R}\times \mathbb{R}$) such that $\frac{\partial \mu}{\partial r_1}$ and $\frac{\partial \mu}{\partial r_2}$ are bounded.

(A4) $\varphi \in C^1(\mathbb{R})\cap W^{1, \infty}(\mathbb{R})$ such that $\varphi=0$ on $(-\infty, 0]$, $\varphi \geq 0$ on $[0, +\infty)$, $\varphi' \geq 0$ on $\mathbb{R}$ and $\mbox{sup}_{r\in \mathbb{R}}\varphi(r)\leq \mbox{min}\{2\varphi(a), |h|_{L^{\infty}(0, T)}H^{-1}\}$. Also, we put $c_{\varphi}=\mbox{sup}_{r\in \mathbb{R}}\varphi(r) + \mbox{sup}_{r\in \mathbb{R}}\varphi'(r)$. 

(A5) $s_0>a$ and $u_0\in H^1(a, s_0)$ such that $\varphi(a)\leq u_0(z) \leq |h|_{L^{\infty}(0, T)}H^{-1}$ on $[a, s_0]$.

For $T>0$, let $s$ be a function on $[0, T]$ and $u$ be a function on $Q_s(T):=\{(t, z) | 0\leq t\leq T, a<s(t)\}$.

Next, we define our concept of solution to (P)$_{u_0, s_0, h}$ on $[0,T]$ in the following way:

\begin{definition}
\label{def}
 We call that pair $(s, u)$ a solution to (P)$_{u_0, s_0, h}$ on $[0, T]$ if the following conditions (S1)-(S6) hold:

(S1) $s$, $s_t\in L^{\infty}(0, T)$, $a<s$ on $[0, T]$, $u\in L^{\infty}(Q_s(T))$, $u_t$, $u_{zz}\in L^2(Q_s(T))$ and $t \in [0, T] \to |u_z(t,  \cdot)|_{L^2(a, s(t))}$ is bounded;

(S2) $u_t-ku_{zz}=0$ on $Q_s(T)$;

(S3) $-ku_z(t, a)=\beta(h(t)-Hu(t, a))$ for a.e. $t\in [0, T]$;

(S4) $-ku_z(t, s(t))=u(t, s(t))s_t(t)$ for a.e. $t\in [0, T]$;

(S5) $s_t(t)=a_0(u(t, s(t))-\varphi(s(t)))$ for a.e. $t\in [0, T]$;

(S6) $s(0)=s_0$ and $u(0, z)=u_0(z)$ for $z \in [a, s_0]$.
\end{definition}

The main result of this paper is concerned with the existence and uniqueness of a  locally in time solution  in the sense of Definition \ref{def} to the problem (P)$_{u_0, s_0, h}$. This result is stated in the next Theorem. 

\begin{theorem}
\label{t1}
Let $T>0$. If (A1)-(A5) hold, then there exists $T^*<T$ such that (P)$_{u_0, s_0, h}$ has a unique solution $(s, u)$ on $[0, T^*]$ satisfying 
$\varphi(a)\leq u \leq |h|_{L^{\infty}(0, T)}H^{-1}$ on $Q_s(T^*)$.
\end{theorem}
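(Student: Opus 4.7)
The plan is to realize $(s,u)$ as a fixed point of a map $\Gamma$ defined on an admissible class of moving boundaries, after first decoupling the problem by treating $s$ as given and solving for $u$ via the abstract subdifferential machinery. Specifically, I would introduce a set
\[
\mathcal{M}(T^*):=\{s\in W^{1,\infty}(0,T^*) : s(0)=s_0,\ a_* \leq s\leq s_0+1,\ |s_t|\leq L\},
\]
where $a_* \in (a,s_0)$ and $L$ are constants chosen from the a priori structure of (1.4) and assumption (A5) (so that $s$ stays bounded away from $a$ and $s_t$ stays bounded by $a_0|h|_{L^\infty}H^{-1}$). For any $s\in\mathcal{M}(T^*)$, the auxiliary problem consists of (1.1)--(1.2) together with the kinematic condition at $z=s(t)$ and the initial datum, $s_t$ being now a known coefficient. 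This is the content of Section 3 as announced in the introduction.

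To handle the auxiliary problem I would rewrite it in the time-dependent convex framework: after pulling back $[a,s(t)]$ to a fixed interval, or keeping the non-cylindrical domain with a time-dependent reference measure, the boundary conditions (1.2) and the Robin-like condition at $z=s(t)$ arise as the subdifferential of a time-dependent convex functional $\varphi^t$ on $L^2$, with the $\beta$-term being incorporated either into $\varphi^t$ (since $\beta$ is Lipschitz and monotone by (A3), its antiderivative is convex) or as a Lipschitz perturbation. Invoking the general theory for $u'+\partial\varphi^t(u)\ni f$ yields existence and uniqueness of $u$ with $u_t,u_{zz}\in L^2(Q_s(T^*))$ and $u_z\in L^\infty(0,T^*;L^2)$. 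Once $u$ is in hand, I would prove the two-sided bound $\varphi(a)\leq u\leq |h|_{L^\infty(0,T)}H^{-1}$ by comparison: testing $(u-|h|_{L^\infty}H^{-1})_+$ and $(\varphi(a)-u)_+$ against the equation and using (A3)--(A5) (in particular $\beta(h-H\cdot|h|_{L^\infty}H^{-1})\le 0$ and $\varphi(s)\le |h|_{L^\infty}H^{-1}$) makes the boundary contributions have the right sign, so the truncations vanish. These bounds are the source of the corresponding bound in the theorem.

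Given the auxiliary solution $u=u^{(s)}$, I would then define
\[
\Gamma(s)(t):=s_0+\int_0^t a_0\bigl(u^{(s)}(\tau,s(\tau))-\varphi(s(\tau))\bigr)\,d\tau.
\]
Because of the $L^\infty$ bound on $u^{(s)}$ and assumption (A4), $|\Gamma(s)_t|\le a_0|h|_{L^\infty}H^{-1}$, and for $T^*$ small enough $\Gamma$ sends $\mathcal{M}(T^*)$ into itself. The contractivity of $\Gamma$ in, say, $C([0,T^*])\cap W^{1,\infty}(0,T^*)$ follows by taking two admissible boundaries $s_1,s_2$, subtracting the corresponding auxiliary problems, and establishing a stability estimate of the form
\[
|u^{(s_1)}-u^{(s_2)}|_{C([0,T^*];L^2)}+|\partial_z(u^{(s_1)}-u^{(s_2)})|_{L^2}\le C(T^*)|s_1-s_2|_{W^{1,\infty}(0,T^*)},
\]
with $C(T^*)\to 0$ as $T^*\to 0$; combining with the Lipschitz character of the traces and of $\varphi$, Banach's fixed point theorem then delivers a unique $s$ on a sufficiently small $[0,T^*]$, hence a unique solution $(s,u)$ in the sense of Definition~\ref{def}.

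The principal difficulty I anticipate lies exactly in the stability estimate for the auxiliary problem with respect to $s$: the domain of $u^{(s_i)}$ moves with $i$, so the difference $u^{(s_1)}-u^{(s_2)}$ is not well defined without a common reference domain, and the trace terms $u^{(s_i)}(t,s_i(t))$ and $u^{(s_i)}_z(t,s_i(t))$ couple the PDE bulk norms to the unknown boundary in a nonlinear way. Overcoming this requires either a careful change of variables $y=(z-a)/(s(t)-a)$, producing a linear parabolic equation on $[0,1]$ with $s$-dependent coefficients, followed by energy estimates quantifying how these coefficients depend on $s$, or a direct comparison on $Q_{s_1\wedge s_2}(T^*)$ with careful treatment of the extra sliver $Q_{s_1\vee s_2}\setminus Q_{s_1\wedge s_2}$; in both routes the key technical input is the boundedness of $t\mapsto |u_z(t,\cdot)|_{L^2(a,s(t))}$ from (S1), which must be uniform over $\mathcal{M}(T^*)$.
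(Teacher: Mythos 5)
Your architecture coincides with the paper's: pull back to the fixed interval via $y=(z-a)/(s(t)-a)$, solve the resulting auxiliary problem for given $s$ by the theory of evolution equations governed by time-dependent subdifferentials, run an outer Banach fixed point for the map $s\mapsto s_0+\int_0^t a_0(u^{(s)}(\tau,1)-\varphi(s(\tau)))\,d\tau$, and obtain the two-sided bound by testing with the truncations $[\varphi(a)-u]^+$ and $[u-|h|_{L^\infty}H^{-1}]^+$. However, two steps you treat as routine are precisely where the paper has to work, and as written they do not go through. First, the boundary functional at the free boundary is \emph{not} convex: the primitive of $r\mapsto a_0 r(r-\varphi(s(t)))$ fails to be convex because this map is decreasing on $[0,\varphi(s(t))/2]$. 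You address the convexity of the $\beta$-term (which is fine since $\beta'\ge 0$) but not of this quadratic term. The paper replaces $u(1)$ by the truncation $\sigma(u(1))=\max\{u(1),\varphi(a)\}$ inside the nonlinearity; monotonicity of $r\mapsto a_0\sigma(r)(\sigma(r)-\varphi(s))$ then holds exactly because (A4) imposes $\sup\varphi\le 2\varphi(a)$, and the truncation is removed only a posteriori once the maximum principle yields $u\ge\varphi(a)$. Without this device your functional $\varphi^t$ is not convex and the subdifferential theory does not apply.

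Second, after the change of variables the equation carries the first-order term $\frac{y s_t(t)}{s(t)-a}\tilde u_y$ on the right-hand side. This is neither part of a convex functional nor a Lipschitz perturbation on $L^2(0,1)$ (it involves a full spatial derivative of the unknown), so the Cauchy problem $u'+\partial\psi^t(u)\ni f$ cannot be invoked directly with this term as $f$. The paper resolves this by freezing the term as $\frac{ys_t}{s-a}f_y$ for a given $f$, solving that linearized problem by the abstract theory, and running an \emph{inner} contraction argument in $f$ (Lemma 3.3), followed by an approximation in $s$ to pass from $W^{1,\infty}$ to $W^{1,2}$ boundaries (Lemma 3.4). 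Your proposal skips this inner fixed point entirely. Both gaps are repairable and your outer scheme, stability estimate, and truncation arguments are the right ones, but the existence of the auxiliary solution is not established by the argument you give.
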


To be able to prove Theorem \ref{t1}, we transform (P)$_{u_0, s_0, h}$, initially posed in a non-cylindrical domain, to  a cylindrical domain. Let $T>0$. For given $s\in W^{1,2}(0, T)$ with $a<s(t)$ on $[0, T]$, we introduce the following new function obtained by the indicated change of variables, "freezing" the moving domain:
\begin{align*}
& \tilde{u}(t, y)=u(t, (1-y)a+ys(t)) \mbox{ for } (t, y)\in Q(T):=(0, T)\times (0, 1). 
\end{align*}
By using the function $\tilde{u}$, we consider now the following problem $(\mbox{P})_{\tilde{u}_0, s_0, h}$:
\begin{align*}
& \tilde{u}_t(t, z)-\frac{k}{(s(t)-a)^2}\tilde{u}_{yy}(t, z)=\frac{ys_t(t)}{s(t)-a}\tilde{u}_y(t, z) \mbox{ for }(t, z)\in Q(T), \tag{2.1}\\
& -\frac{k}{s(t)-a}\tilde{u}_y(t, 0)=\beta(h(t)-H\tilde{u}(t, 0)) \mbox{ for }t\in(0, T), \tag{2.2}\\
& -\frac{k}{s(t)-a}\tilde{u}_y(t, 1)=\tilde{u}(t, 1)s_t(t) \mbox{ for }t\in (0, T),  \tag{2.3}\\
& s_t(t)=a_0(\tilde{u}(t, 1)-\varphi(s(t)) \mbox{ for }t\in (0, T), \tag{2.4}\\
& s(0)=s_0, \tag{2.5}\\
& \tilde{u}(0, y)=\tilde{u}(0, y)=u_0(1-y)a+y s(0))(:=\tilde{u}_0(y)) \mbox{ for }y \in [0, 1]. \tag{2.6}
\end{align*}

\begin{definition}
For $T>0$, let $s$ be functions on $[0, T]$ and $\tilde{u}$ be a function on $Q(T)$, respectively. We call that a pair $(s, \tilde{u})$ is a solution of (P)$_{\tilde{u}_0, s_0, h}$ on $[0, T]$ if the conditions (S'1)-(S'2) hold:

(S'1) $s$, $s_t\in L^{\infty}(0, T)$, $a<s$ on $[0, T]$, $\tilde{u}\in W^{1,2}(Q(T))\cap L^{\infty}(0, T; H^1(0, 1))\cap L^2(0, T;H^2(0, 1))\cap L^{\infty}(Q(T))$.

(S'2) (2.1)--(2.6) hold.
\end{definition}

\begin{theorem}
\label{t2}
Let $T>0$. If (A1)-(A5) hold, then there exists $T^*<T$ such that (P)$_{\tilde{u}_0, s_0, h}$ has a unique solution $(s, \tilde{u})$
on $[0, T^*]$.
\end{theorem}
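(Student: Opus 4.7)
The plan is to establish Theorem \ref{t2} by a Banach fixed-point argument on the moving-boundary component $s$, using the auxiliary theory from Section 3 to handle the $\tilde{u}$-component for each frozen $s$.

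First, I would fix constants $M_0 > 0$ (an upper bound for $|s_t|$) and $T_0 > 0$ with $T_0 < (s_0 - a)/M_0$, and define the admissible set
\begin{equation*}
X(T_0, M_0) := \{ s \in W^{1,\infty}(0, T_0) \mid s(0) = s_0,\ |s_t|_{L^\infty(0, T_0)} \leq M_0,\ s(t) \geq s_0 - M_0 t\},
\end{equation*}
which is a closed subset of $C([0, T_0])$, and on which $s(t) \geq a + (s_0 - a)/2$ uniformly provided $T_0$ is small. For each $s \in X(T_0, M_0)$, the auxiliary problem treated in Section 3 provides a unique $\tilde{u}^s \in W^{1,2}(Q(T_0)) \cap L^\infty(0, T_0; H^1(0,1)) \cap L^2(0, T_0; H^2(0,1))$ solving (2.1)--(2.3) with initial datum $\tilde{u}_0$. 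The maximum principle, combined with (A3)--(A5) (in particular $\varphi(a) \leq u_0 \leq |h|_{L^\infty}H^{-1}$, $\beta = 0$ on $(-\infty, 0]$, and $\sup\varphi \leq |h|_{L^\infty}H^{-1}$), then yields the pointwise bounds $\varphi(a) \leq \tilde{u}^s \leq |h|_{L^\infty(0,T)}H^{-1}$ on $Q(T_0)$; the lower bound is what keeps the ODE (2.4) from driving $s$ below $a$.

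Next, I would define the map $\Gamma : X(T_0, M_0) \to C([0, T_0])$ by
\begin{equation*}
\Gamma(s)(t) := s_0 + \int_0^t a_0\bigl(\tilde{u}^s(\tau, 1) - \varphi(s(\tau))\bigr)\, d\tau.
\end{equation*}
The pointwise bounds on $\tilde{u}^s$ together with $0 \leq \varphi \leq |h|_{L^\infty}H^{-1}$ immediately give $|\Gamma(s)_t| \leq a_0 |h|_{L^\infty}H^{-1}$, so choosing $M_0 := a_0 |h|_{L^\infty}H^{-1}$ ensures $\Gamma$ maps $X(T_0, M_0)$ into itself once $T_0$ is suitably small.

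The decisive step is the contraction estimate: for $s^{(1)}, s^{(2)} \in X(T_0, M_0)$, the difference $w := \tilde{u}^{s^{(1)}} - \tilde{u}^{s^{(2)}}$ satisfies a linear parabolic problem on $Q(T_0)$ whose coefficients and source terms are differences of the $s^{(i)}$-dependent quantities in (2.1)--(2.3). Testing this equation with $w$ and using the uniform estimates on $\tilde{u}^{s^{(i)}}$ (its $L^\infty$ bound, plus the $L^2$-control of $\tilde{u}_y$ and $\tilde{u}_{yy}$ coming from Section 3), together with the $L^\infty$-Lipschitz dependence of the coefficients $1/(s-a)$, $1/(s-a)^2$, $ys_t/(s-a)$ on $s$, leads by Gronwall to
\begin{equation*}
|w(t,\cdot)|_{H^1(0,1)}^2 \leq C T_0\, |s^{(1)} - s^{(2)}|_{W^{1,\infty}(0,T_0)}^2.
\end{equation*}
A trace estimate then controls $|\tilde{u}^{s^{(1)}}(\cdot, 1) - \tilde{u}^{s^{(2)}}(\cdot, 1)|$, and combined with the Lipschitz character of $\varphi$ this yields
\begin{equation*}
|\Gamma(s^{(1)}) - \Gamma(s^{(2)})|_{W^{1,\infty}(0, T_0)} \leq C\sqrt{T_0}\, |s^{(1)} - s^{(2)}|_{W^{1,\infty}(0, T_0)}.
\end{equation*}
Choosing $T^* \leq T_0$ small enough that $C\sqrt{T^*} < 1$, Banach's fixed-point theorem produces a unique fixed point $s \in X(T^*, M_0)$, and the corresponding $\tilde{u}^s$ provides the solution of Theorem \ref{t2}; uniqueness for the coupled system follows from the same contraction estimate applied to any two solutions.

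I expect the main obstacle to be precisely the contraction estimate for $w$: the source terms produced when subtracting the two equations involve $\tilde{u}^{s^{(2)}}_{yy}$ and $\tilde{u}^{s^{(2)}}_y$ multiplied by differences of $s$-dependent coefficients, which are only in $L^2_t$, and the nonlinear boundary conditions (2.2)--(2.3) (the latter containing $\tilde{u}(t,1) s_t(t)$) couple $w$ to the difference $s^{(1)}_t - s^{(2)}_t$. Carefully absorbing the resulting boundary and interior terms — relying on the uniform $L^\infty$ bound on $\tilde{u}^{s^{(i)}}$, the monotonicity $\beta' \geq 0$, and the higher regularity furnished by Section 3 — is where the bulk of the estimation effort will lie.
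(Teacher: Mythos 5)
Your overall architecture (solve an auxiliary problem for frozen $s$, then run Banach's fixed point on $s$) matches the paper's, but two of your key steps do not go through as stated. First, you take the auxiliary problem to be (2.1)--(2.3) with the frozen $s_t$ kept in the boundary condition at $y=1$, and you claim the maximum principle gives $\varphi(a)\leq\tilde{u}^s\leq|h|_{L^\infty}H^{-1}$ for \emph{every} admissible $s$; this is the bound you then use both to keep $\Gamma(s)$ above $a$ and to fix $M_0$. But for a frozen $s$ whose $s_t$ is unrelated to $\tilde{u}^s$ (and may be negative), the boundary term $\tilde{u}(t,1)s_t(t)$ has no sign structure: in the comparison argument the contribution at the moving end is $-\tfrac{s_t}{2}[u(s)-M]^+\,(u(s)+M)$, which has the wrong sign whenever $s_t<0$, so the upper bound fails, and similarly for the lower bound. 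The paper avoids this entirely: it substitutes the ODE (2.4) into (2.3) so that the auxiliary boundary condition reads $-\frac{1}{s-a}\tilde{u}_y(1)=a_0\sigma(\tilde{u}(1))(\sigma(\tilde{u}(1))-\varphi(s))$ with the truncation $\sigma(r)=\max\{r,\varphi(a)\}$, which (i) makes the boundary nonlinearity monotone so the subdifferential machinery applies, and (ii) yields $\sigma(\Phi(s)(t,1))\geq\varphi(a)$ by construction, giving the lower bound $\Gamma_T(s)(t)\geq s_0+a_0(\varphi(a)-c_\varphi)t$ without any maximum principle. The maximum principle is invoked only at the very end, for the actual fixed-point solution, where $s_t=a_0(u(t,s(t))-\varphi(s(t)))$ restores the sign. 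You need either this substitution-plus-truncation device or a genuinely different argument for the frozen problem.

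Second, your contraction is set in $W^{1,\infty}(0,T_0)$, and to estimate $|\Gamma(s^{(1)})_t-\Gamma(s^{(2)})_t|_{L^\infty}$ you need the trace difference $|w(t,1)|$ uniformly in $t$, for which you assert $|w(t,\cdot)|^2_{H^1(0,1)}\leq CT_0|s^{(1)}-s^{(2)}|^2_{W^{1,\infty}}$ ``by testing with $w$.'' Testing the difference equation with $w$ only yields $w\in L^\infty(0,T_0;L^2(0,1))\cap L^2(0,T_0;H^1(0,1))$; it does not give a pointwise-in-time $H^1$ bound, and obtaining one would require testing with $w_t$, which the nonlinear boundary terms and the $L^2_t$-only coefficients make delicate. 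This is why the paper closes the contraction in the weaker $W^{1,2}(0,T)$ topology: there one only needs $\int_0^{T_1}|w(\tau,1)|^2\,d\tau$, which the interpolation $|w(\tau,1)|^2\leq C_e|w(\tau)|_{H^1}|w(\tau)|_{L^2}$ and the energy estimate do control (cf.\ (4.12)--(4.14)). As written, your contraction estimate is not justified; switching the ambient space for $s$ to a closed ball of $W^{1,2}(0,T)$ (with the separate pointwise lower bound on $\Gamma(s)$ supplied by the truncation above) repairs the argument.
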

In the rest of the paper, we focus on proving Theorem \ref{t2}, which finally will turn to provide candidate solutions for Theorem \ref{t1}.

\section{Auxiliary Problem (\mbox{AP})}
\label{AP}

In this section, we first prove Theorem \ref{t2}. Let $T>0$, $L>a$ and $s\in W^{1, 2}(0, T)$ with $a<s<L$ on $[0, T]$,  we introduce the following auxiliary problem $(\mbox{AP})_{\tilde{u}_0, s, h}$: 
\begin{align*}
& \tilde{u}_t(t, z)-\frac{k}{(s(t)-a)^2}\tilde{u}_{yy}(t, z)=\frac{ys_t(t)}{s(t)-a} \tilde{u}_y(t, z) \mbox{ for }(t, z)\in Q(T), \tag{3.1}\\
& -\frac{1}{s(t)-a}\tilde{u}_y(t, 0)=\beta(h(t)-H\tilde{u}(t, 0)) \mbox{ for }t\in(0, T), \tag{3.2}\\
& -\frac{1}{s(t)-a}\tilde{u}_y(t, 1)=a_0\tilde{u}(t, 1)(\tilde{u}(t, 1)-\varphi(s(t))) \mbox{ for }t\in (0, T),  \tag{3.3}\\
& \tilde{u}(0, y)=\tilde{u}_0(y) \mbox{ for }y \in [0, 1]. \tag{3.4}
\end{align*}

In order to study $(\mbox{AP})_{\tilde{u}_0, s, h}$, for given $f\in W^{1,2}(Q(T))\cap L^2(0, T; H^1(0, 1))$, we consider firstly the following problem called $(\mbox{AP})_{\tilde{u}_0, f, s, h}$. This reads:
\begin{align*}
& \tilde{u}_t(t, z)-\frac{k}{(s(t)-a)^2}\tilde{u}_{yy}(t, z)=\frac{ys_t(t)}{s(t)-a} f_y(t, z) \mbox{ for }(t, z)\in Q(T), \\
& -\frac{1}{s(t)-a}\tilde{u}_y(t, 0)=\beta(h(t)-H\tilde{u}(t, 0)) \mbox{ for }t\in(0, T), \\
& -\frac{1}{s(t)-a}\tilde{u}_y(t, 1)=a_0\tilde{u}(t, 1)(\tilde{u}(t, 1)-\varphi(s(t))) \mbox{ for }t\in (0, T), \\
& \tilde{u}(0, y)=\tilde{u}_0(y) \mbox{ for }y \in [0, 1]. 
\end{align*}

Now, we introduce a family $\{ \psi^t \}_{t\in[0, T]}$ of time-dependent functionals $\psi^t: L^2(0, 1) \to \mathbb{R}\cup \{+\infty \}$
for $t\in [0, T]$, defined by 
%$\psi^t(u):=$
$$
\psi^t(u):=\begin{cases}
             \displaystyle{\frac{k}{2(s(t)-a)^2} \int_0^1|u_y(y)|^2 dy} + \displaystyle{\frac{1}{s(t)-a}\int_0^{u(1)} a_0 \sigma(\xi)(\sigma(\xi)-\varphi(s(t)) d\xi}\\
             -\displaystyle{\frac{1}{s(t)-a}\int_0^{u(0)}\beta(h(t)-H\xi)d\xi} \mbox{ if } u\in D(\psi^t),\\
             +\infty \mbox{ if othewise},
             \end{cases}
$$
where 
$$
 \sigma(r)=\begin{cases}
               r \hspace{5mm}\mbox{ if } r>\varphi(a), \\
               \varphi(a) \mbox{ if } r\leq \varphi(a), 
               \end{cases}
$$
and $D(\psi^t)=\{z\in H^1(0, 1) | z\geq 0 \mbox{ on } [0, 1] \}$ for $t \in [0, T]$. 
What concerns the function $\psi^t$, we prove the following structural properties. 

\begin{lemma}
\label{lem1}
Let $s\in W^{1,2}(0, T)$ with $a<s(t)<L$ on $[0, T]$. Assuming (A1)-(A5), then the following statements hold: 
\begin{itemize}
\item[(1)] There exists positive constant $C_0$ and $C_1$ such that the following inequalities hold:
\begin{align*}
(i) & \ |u(0)|^2 \leq C_0 \psi^t(u)+C_1 \mbox{ for }u\in D(\psi^t)\\
(ii) & \ |u(1)|^2 \leq C_0 \psi^t(u)+C_1 \mbox{ for }u\in D(\psi^t)\\
(ii) & \ \frac{k}{2(s(t)-a)^2}|u_y|^2_{L^2(0, 1)} \leq C_0 \psi^t(u)+C_1 \mbox{ for }u\in D(\psi^t)
\end{align*}
\item[(2)] For $t\in [0, T]$, the functional $\psi^t$ is proper, lower semi-continuous, and  convex  on $L^2(0, 1)$.
\end{itemize}
\end{lemma}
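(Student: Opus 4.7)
The plan is to prove (1) by decomposing $\psi^t(u) = A(u) + B(u) - C(u)$ where
\[
A = \frac{k}{2(s(t)-a)^2}|u_y|_{L^2(0,1)}^2, \quad B = \frac{a_0}{s(t)-a}\int_0^{u(1)}\sigma(\xi)(\sigma(\xi)-\varphi(s(t)))\,d\xi,
\]
and $C = \frac{1}{s(t)-a}\int_0^{u(0)}\beta(h(t)-H\xi)\,d\xi$. By (A3), $0 \le C \le c_\beta u(0)/(s(t)-a)$. The main effort goes into a lower bound on $B$: splitting the integral at $\xi=\varphi(a)$ (where the definition of $\sigma$ switches), using $\sigma(\xi)=\xi$ beyond, invoking the key hypothesis $\varphi(s(t))\le 2\varphi(a)$ from (A4), and applying Young's inequality to the cross-term $\varphi(s(t))\int\xi\,d\xi$, I expect to reach $B \ge \frac{a_0}{6(s(t)-a)}u(1)^3 - \frac{K}{s(t)-a}$ with $K$ depending only on $a_0$ and $\varphi(a)$. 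This cubic coercivity in $u(1)$ is the workhorse of the argument.

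Next, I would dominate the negative contribution $-C$ by writing $u(0) \le u(1) + |u_y|_{L^2(0,1)}$ (fundamental theorem of calculus), then applying Young's inequality with exponents $3,3/2$ to split $c_\beta u(0)/(s(t)-a)$ into a piece absorbed by $\frac{a_0}{12(s(t)-a)}u(1)^3$ and a piece absorbed by $\tfrac12 A$, both modulo additive constants. This yields
\[
\psi^t(u) \ge \tfrac12 A + \tfrac{a_0}{12(s(t)-a)}u(1)^3 - C',
\]
which gives inequality (iii) immediately and (ii) after the elementary bound $u(1)^2 \le 1 + u(1)^3$. Inequality (i) then follows from the pointwise estimate $u(0)^2 \le 2u(1)^2 + 2|u_y|_{L^2(0,1)}^2$ combined with the bounds just obtained. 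All constants depend on $L$ and on a positive lower bound of $s(t)-a$, which is available since $s \in W^{1,2}(0,T) \hookrightarrow C[0,T]$ together with $s>a$ pointwise provides a uniform lower bound after possibly shrinking $T$.

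For part (2), properness is immediate since any constant $c\in[\varphi(a),|h|_{L^\infty(0,T)}H^{-1}]$ belongs to $D(\psi^t)$ with finite value. For convexity, the Dirichlet energy is quadratic in $u_y$; the map $v \mapsto \int_0^v a_0\sigma(\xi)(\sigma(\xi)-\varphi(s(t)))\,d\xi$ has derivative $a_0\sigma(v)(\sigma(v)-\varphi(s(t)))$, which is the constant $a_0\varphi(a)(\varphi(a)-\varphi(s(t)))$ on $[0,\varphi(a)]$ and equals $a_0 v(v-\varphi(s(t)))$ beyond with its own derivative $a_0(2v-\varphi(s(t)))\ge 0$ thanks to $\varphi(s(t))\le 2\varphi(a)$, so this scalar map is non-decreasing; composing with the linear trace $u\mapsto u(1)$ preserves convexity. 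For the $\beta$-term, $v\mapsto -\int_0^v \beta(h(t)-H\xi)\,d\xi$ has derivative $-\beta(h(t)-Hv)$, which is non-decreasing since $\beta'\ge 0$ by (A3). Finally, for lower semi-continuity on $L^2(0,1)$, take $u_n \to u$ in $L^2$ with $\sup_n \psi^t(u_n)<\infty$; inequality (iii) then provides an $H^1$-bound, so along a subsequence $u_n \rightharpoonup u$ in $H^1(0,1)$ and $u_n \to u$ uniformly on $[0,1]$ by compact embedding, which preserves nonnegativity; weak lower semi-continuity of the Dirichlet energy plus continuity of the two boundary integrals in $u_n(0),u_n(1)\to u(0),u(1)$ yields $\psi^t(u)\le\liminf_n \psi^t(u_n)$.

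The hard part will be engineering the coercivity of $B$. Without the restriction $\varphi(s(t))\le 2\varphi(a)$ in (A4), the cross-term $-\varphi(s(t))\sigma(\xi)$ could not be dominated by the $\sigma(\xi)^2$ contribution, and without the resulting cubic growth in $u(1)$ the linear $\beta$-flux entering through $C$ could not be absorbed by any Young step. Once this coercivity is in hand, however, all remaining estimates are standard trace/Sobolev manipulations on the interval $(0,1)$.
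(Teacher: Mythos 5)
Your proposal is correct and follows essentially the same route as the paper: the same decomposition of $\psi^t$, cubic coercivity of the $\sigma$-integral in $u(1)$ via Young's inequality, absorption of the $\beta$-term through $u(0)=u(1)-\int_0^1 u_y\,dy$, convexity from monotonicity of the scalar integrands (where $\varphi(s(t))\le 2\varphi(a)$ is indeed the decisive hypothesis), and lower semi-continuity from the coercivity estimates plus the embedding $H^1(0,1)\hookrightarrow C([0,1])$. The only cosmetic difference is that you retain the cubic bound and deduce (ii) via $u(1)^2\le 1+u(1)^3$, whereas the paper passes directly to $u^3(1)\ge \varphi(a)u^2(1)$; also note that no shrinking of $T$ is needed for the lower bound on $s(t)-a$, since continuity of $s$ on the compact interval $[0,T]$ already gives $\min_{[0,T]}(s-a)>0$.
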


\begin{proof}
First, we note that for $t\in [0, T]$ if $u\in D(\psi^t)$ then, $u(0)$ and $u(1)$ are positive.
Let $t\in [0, T]$ and $u\in D(\psi^t)$. Then, if $u(1) >\varphi(a)$, then 
\begin{align*}
& \int_0^{u(1)} a_0\sigma(\xi)(\sigma(\xi)-\varphi(s(t))d\xi \\%=\int_{\varphi(a)}^{u(1)} a_0\xi(\xi-\varphi(s(t))d\xi \\
=&a_0\varphi^2(a)(\varphi(a)-\varphi(s(t))\\
& +a_0\frac{u^3(1)}{3}-a_0 \varphi(s(t))\frac{u^2(1)}{2}-\biggl(a_0\frac{\varphi^3(a)}{3}-a_0 \varphi(s(t))\frac{\varphi^2(a)}{2}\biggr)\\
\geq &\frac{a_0}{3} u^3(1) (1-2\eta^{3/2})-\frac{a_0}{3}\left(\frac{c_{\varphi}}{2\eta}\right)^3
+\biggl(a_0\frac{2\varphi^3(a)}{3}-a_0 \frac{\varphi^2(a)c_{\varphi}}{2}\biggr),
\tag{3.5}
\end{align*}
where $\eta$ is arbtrary positive constant. By taking  $\eta$ suitably  in (3.5) and putting $\delta_s\leq s(t)-a$ for $t\in [0, T]$, we see that there exists $c_0=c_0(\eta)$, $c_1=c_1(\eta$) such that 
\begin{align*}
& \frac{1}{s(t)-a}\int_0^{u(1)} a_0 \sigma(\xi)(\sigma(\xi)-\varphi(s(t))d\xi \geq \frac{c_0}{L-a}u^3(1)-\frac{c_1}{\delta_s}\geq \frac{c_0\varphi(a)}{L-a}u^2(1)-\frac{c_1}{\delta_s}. 
\tag{3.6}
\end{align*}
In the case $u(1)\leq \varphi(a)$, then $\sigma(u(1))=\varphi(a)$ so that we have the similarly inequality (3.6). Also, we have that 
%by putting $\delta_s>0$ such that $s-a \geq \delta_s$ we have 
\begin{align*}
%& -\int_0^{u(0)} \beta(h(t)-H\xi)d\xi \geq -c_{\beta} u(0)h(t)+c_{\beta} H \frac{u^2(0)}{2}\\
%& \geq  -c_{\beta}H \frac{u^2(0)}{4} - c_{\beta}\frac{h^2(t)}{H} + c_{\beta} H \frac{u^2(0)}{2}\\
%& \geq c_{\beta} H \frac{u^2(0)}{4}- c_{\beta}\frac{|h|^2_{L^{\infty}(0, T)}}{H},
%\tag{3.7}
& \frac{-1}{s(t)-a}\int_0^{u(0)} \beta(h(t)-H\xi)d\xi \geq \frac{-c_{\beta}}{s(t)-a}u(0)=\frac{-c_{\beta}}{s(t)-a}\left (u(1)-\int_0^1 u_y(y) dy \right)\\
& \geq -\frac{c_0\varphi(a)}{2(L-a)} u^2(1)-\frac{L-a}{2c_0\varphi(a)}\left(\frac{c_{\beta}}{\delta_s}\right)^2-\frac{k}{4(s(t)-a)^2}\int_0^1 |u_y(y)|^2 dy -\frac{c^2_{\beta}}{k}\\
& \geq -\frac{c_0\varphi(a)}{2(L-a)} u^2(1)-\frac{k}{4(s(t)-a)^2}\int_0^1 |u_y(y)|^2 dy-\left(\frac{L-a}{2c_0\varphi(a)}\left(\frac{c_{\beta}}{\delta_s}\right)^2 + \frac{c^2_{\beta}}{k} \right),  
\tag{3.7}
\end{align*}
where $c_{\beta}$ is the same constant as in (A3). By adding (3.6) and (3.7), it yields
\begin{align*}
\psi^t(u) &\geq \frac{k}{4(s(t)-a)^2} \int_0^1|u_y(y)|^2 dy \\
& + \frac{c_0\varphi(a)}{2(L-a)} u^2(1)-\frac{c_1}{\delta_s}-\left(\frac{L-a}{2c_0\varphi(a)}\left(\frac{c_{\beta}}{\delta_s}\right)^2 + \frac{c^2_{\beta}}{k} \right). 
%\psi^t(u) &\geq \frac{k}{2(s(t)-a)^2} \int_0^1|u_y|^2 dy \\
%& + \frac{c_0\varphi(a)}{2} u^2(1)-c_1 + \frac{c_{\beta}H}{4(L-a)}u^2(0)-\frac{c_{\beta}|h|^2_{L^{\infty}(0, T)}}{H\delta_s}. 
\tag{3.8}
\end{align*}
Also, it holds that 
\begin{align*}
|u(0)|^2 & =\biggl | \int_0^1 u_y(y) dy + u(1) \biggr|^2 \leq 2\left (\int_0^1 |u_y(y)|^2 dy + |u(1)|^2 \right)\\
& \leq 2\left (\frac{2(L-a)^2}{k} \frac{k}{2(s(t)-a)^2}\int_0^1 |u_y(y)|^2 dy + |u(1)|^2 \right). 
%\tag{3.8}
\end{align*}
Therefore, by (3.8) and the estimate of $u(0)$ we see that the statement (1) of Lemma \ref{lem1} holds. 

We now prove statement (2).  For $r\in \mathbb{R}$, put 
\begin{align*}
& g_1(s(t), r)=\frac{1}{s(t)-a}\int_0^r a_0\sigma(\xi)(\sigma(\xi)-\varphi(s(t)) d\xi, \\
& g_2(s(t), h(t), r)=-\frac{1}{s(t)-a}\int_0^r \beta(h(t)-H\xi)d\xi. 
\end{align*}
Then, by $a<s(t)$, $\beta' \geq 0$ in (A3) and (A4) we see that 
$r \mapsto a_0\sigma(r)(\sigma(r)-\varphi(s(t))$ and $r\mapsto -\beta(h(t)-Hr)$ are also monotone increasing. 
%it holds that 
%\begin{align*}
%& \frac{d^2}{dr^2}g_1(r)=\frac{1}{s(t)-a}\left(2r-\varphi(s(t)\right)\geq 0 \mbox{ for }r\geq \varphi(s(t)), \\
%& \frac{d^2}{dr^2}g_2(r)=\frac{\beta'(h(t)-Hr)H}{s(t)-a} \geq 0 \mbox{ for }r\in \mathbb{R}. 
%\end{align*}
%Therefore, we see that  for $t\in [0, T]$ and $u\in D(\psi^t)$, $(g_1)''(u(1))\geq 0$ and $ (g_2)''(u(0)) \geq 0$ because of $u(1)\geq \varphi(s(t))$, 
This means that $\psi ^t$ is convex on $L^2(0, 1)$. 
Also, the lower semi-continuity of $\psi^t$ is enough to prove that the level set of $\psi^t$ is closed in $L^2(0, 1)$. This is easy to prove by using Lemma \ref{lem1} and the Sobolev's embedding $H^1(0, 1) \hookrightarrow C([0, 1])$ in one dimensional case. Thus, we see that for $t\geq 0$, $\psi^t$ is a proper, lower semi-continuous, convex function on $L^2(0, 1)$.
\end{proof}

By Lemma \ref{lem1} we obtain the following existence result concerning the solutions to problem  $(\mbox{AP})_{\tilde{u}_0, f, s, h}$.
\begin{lemma}
\label{lem2}
Let $T>0$ and $L>a$. If (A1)-(A5) hold, then, for given $s\in W^{1,2}(0, T)$ with $a<s<L$ on $[0, T]$ and  $f\in W^{1,2}(Q(T))\cap L^{\infty}(0, T;H^1(0, 1))$, then the problem $(\mbox{AP})_{\tilde{u}_0, s, f, h}$ admits a unique solution $\tilde{u}$ on $[0, T]$ such that $\tilde{u}\in W^{1,2}(Q(T))\cap L^{\infty}(0, T;H^1(0, 1))$. Moreover, the function $t\to \psi^t(u(t))$ is absolutely continuous on $[0, T]$.
\end{lemma}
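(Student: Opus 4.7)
The plan is to recast $(\mbox{AP})_{\tilde{u}_0, f, s, h}$ as an abstract Cauchy problem in $H:=L^2(0,1)$ of the form
\begin{equation*}
u'(t) + \partial \psi^t(u(t)) \ni g(t) \quad \text{in } H, \qquad u(0)=\tilde{u}_0,
\end{equation*}
where the forcing term is $g(t,y):=\frac{y s_t(t)}{s(t)-a} f_y(t,y)$, and then apply the standard existence theory for evolution equations governed by time-dependent subdifferentials (in the spirit of Kenmochi/Yamada/Brezis). A direct check (via integration by parts) shows that smooth elements of $\partial \psi^t(u)$ coincide with $-\frac{k}{(s(t)-a)^2}u_{yy}$ together with the Robin-type boundary conditions (3.2)--(3.3) (with $\sigma(u(1))$ in place of $u(1)$), so that any solution of the abstract equation with $u(t)\in D(\psi^t)$ and $u\ge 0$ is a strong solution of the PDE. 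The $\sigma$-truncation serves only to impose convexity at the $y=1$ boundary; a maximum-principle argument will later recover $u(1)\ge\varphi(a)$, so that $\sigma(u(1))=u(1)$ and the truncation is inactive.

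The technical input that must be verified to invoke the abstract theorem is twofold. First, for each fixed $t$ the functional $\psi^t$ is proper, l.s.c.\ and convex, and coercive in $H^1(0,1)$; both are already supplied by Lemma \ref{lem1}. Second, the time dependence of $\psi^t$ must satisfy the usual ``$W^{1,2}$-time-regularity'' condition, i.e.\ there should exist an absolutely continuous $\alpha\in W^{1,2}(0,T)$ and a constant $C>0$ such that for every $0\le \tau\le t\le T$ and $u\in D(\psi^\tau)$ one can produce $\hat u\in D(\psi^t)$ (in fact $\hat u=u$ here, since the domain is independent of $t$) with
\begin{equation*}
\psi^t(u) - \psi^\tau(u) \le |\alpha(t)-\alpha(\tau)|\bigl(\psi^\tau(u)+C\bigr).
\end{equation*}
Since $s,\;s_t,\;h,\;h_t$ belong to $L^2(0,T)$ and $s(t)-a\ge \delta_s>0$, differentiating $\psi^t(u)$ in $t$ at fixed $u$ produces bounded factors times $|s_t|$ and $|h_t|$ multiplying $\psi^\tau(u)+C$; thus one may take $\alpha(t):=C'\int_0^t(1+|s_t|+|h_t|)d\tau$. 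This is the step I expect to be the main obstacle, because it requires careful handling of the ratio $(s(t)-a)^{-1}$ and of the boundary pieces $\int_0^{u(0)}\beta(h-H\xi)d\xi$ and $\int_0^{u(1)}a_0\sigma(\sigma-\varphi(s))d\xi$, using (A2)--(A5) together with the estimates (3.6)--(3.8).

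Once these hypotheses are in place, the abstract theorem gives a unique $u\in W^{1,2}(0,T;H)$ with $u(t)\in D(\psi^t)$ a.e.\ and $g-u_t \in \partial \psi^t(u)$ a.e. Uniqueness and continuous dependence follow from monotonicity of $\partial \psi^t$ by a standard Gronwall argument on $|u^1(t)-u^2(t)|_H^2$. The additional regularity $u\in L^\infty(0,T;H^1(0,1))$ is read off from Lemma \ref{lem1}(1)(iii) together with the fact (a further consequence of the abstract theory) that $t\mapsto \psi^t(u(t))$ is bounded; more precisely, the chain-rule identity
\begin{equation*}
\tfrac{d}{dt}\psi^t(u(t)) = \bigl(g(t)-u_t(t),u_t(t)\bigr)_H + \partial_t\psi^t(u(t)),
\end{equation*}
which holds under the time-regularity condition invoked above, both produces the absolute continuity of $t\mapsto \psi^t(u(t))$ and, after integration on $[0,T]$ and use of the control on $\partial_t\psi^t$, furnishes the $L^\infty(0,T;H^1)$ bound. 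Finally, the bulk equation and boundary conditions in the PDE sense are recovered by testing $u_t+\partial\psi^t(u)\ni g$ against arbitrary $v\in H^1(0,1)$, integrating by parts, and identifying bulk and boundary terms, which also verifies that $u\ge 0$ via a truncation argument against $-u^-$ (using $\beta=\varphi=0$ on $(-\infty,0]$).
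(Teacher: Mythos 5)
Your proposal is correct and follows essentially the same route as the paper: both recast $(\mbox{AP})_{\tilde{u}_0, f, s, h}$ as the Cauchy problem $\tilde{u}_t+\partial\psi^t(\tilde{u})\ni \frac{ys_t}{s-a}f_y$ in $L^2(0,1)$, identify $\partial\psi^t$ with the elliptic operator plus the ($\sigma$-truncated) Robin boundary conditions, verify the Kenmochi-type time-dependence estimates on $\psi^t$ (the paper's (3.9)--(3.10) with $\bar u=u$, which is your $\alpha$-condition), and then invoke the abstract theory of evolution equations governed by time-dependent subdifferentials to obtain existence, uniqueness, the $L^\infty(0,T;H^1)$ bound via Lemma \ref{lem1}, and the absolute continuity of $t\mapsto\psi^t(\tilde{u}(t))$.
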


\begin{proof}
By Lemma \ref{lem1}, for $t\in [0, T]$ $\psi^t$ is a proper lower semi-continuous convex function on $L^2(0, 1)$ and $\partial \psi^t$ is single valued. With this information at hand, we see that $z^*=\partial \psi^t(u)$ if and only if $z^*\in L^2(0, 1)$ and 
\begin{align*}
& z^*=-\frac{k}{(s(t)-a)^2} u_{yy} \mbox{ on } (0, 1),\\
& -\frac{k}{s(t)-a}u_z(0)=\beta(h(t)-Hu(0)),\\
& -\frac{k}{s(t)-a}u_z(1)=a_0\sigma(u(1))(\sigma(u(1))-\varphi(s(t))). 
\end{align*}
Also, there exists a positive constant $C$ such that for each $t_1$, $t_2\in [0, T]$ with $t_1\leq t_2$, and for any $u\in D(\psi^{t_1})$, there exists $\bar{u}\in D(\psi^{t_2})$ such that 
\begin{align*}
&|\bar{u}-u|_{L^2(0, 1)} \leq |s(t_1)-s(t_2)|(1+|\varphi^{t_1}(u)|^{1/2}), \tag{3.9}\\
&|\varphi^{t_2}(\bar{u})-\varphi^{t_1}(u)|\leq C(|s(t_1)-s(t_2)|+|h(t_1)-h(t_2)|)(1+|\varphi^{t_1}(u)|). \tag{3.10}
\end{align*}
Indeed, by taking $\bar{u}:=u$ it is easy to prove that (3.9) and (3.10) holds. 
%By choosing $\delta$ such that $s(t)-a\geq \delta$ for $t>0$ it holds that 
%\begin{align*}
%& |\varphi^{t_2}(\bar{u})-\varphi^{t_1}(u)| \\
%\leq &  \frac{k}{(s(t_2)-a)^2}\int_0^1 |\tilde{u}_y|^2 dy - \frac{k}{(s(t_1)-a)^2}\int_0^1 |\tilde{u}_y|^2 dy \\ 
%& + \frac{1}{3}|(\bar{u}(1))^3-(u(1))^3|+\frac{1}{2}|\varphi(s(t_2))(\bar{u}(1))^2-\varphi(s(t_1))(u(1))^2| + C_{\beta}|\bar{u}(0)-u(0)|\\
%\leq & |s(t_1)-s(t_2)| \frac{2(L-a)}{\delta^2} \frac{k}{(s(t_1)-a)^2} \int_0^1 |\tilde{u}_y|^2 dy \\
%& + \frac{1}{3}|s(t_2)-s(t_1)||(\bar{u}(1))^2+\bar{u}(1)u(1)+(u(1))^2| + \frac{\varphi(s(t_2))}{2}|(\bar{u}(1))^2-(u(1))^2| \\
%& + \frac{1}{2}|\varphi(s(t_1))-\varphi(s(t_2))|(u(1))^2 + C_{\beta}|s(t_2)-s(t_1)|.
%\end{align*}
%Here, we have 
%\begin{align*}
%& |(\bar{u}(1))^2+\bar{u}(1)u(1)+(u(1))^2| \leq \frac{3}{2}((\bar{u}(1))^2+(u(1))^2)\\
%%& \leq \frac{3}{2}(3(u(1))^2 +2(s(t_2)-s(t_1))^2) \leq \frac{3}{2}(3(u(1))^2 +8L^2), \\
%\end{align*}
%and 
%\begin{align*}
%& |(\bar{u}(1))^2-(u(1))^2| \leq 2|s(t_2)-s(t_1)|(u(1)+L),
%\end{align*}
%so that we see that there exists $C_2>0$ [which depends on $C_{\varphi}$, $C_{\beta}$ and $\delta $] such that  
%\begin{align*}
%& |\varphi^{t_2}(\bar{u})-\varphi^{t_1}(u)| \leq C_2|s(t_2)-s(t_1)|\left(\frac{k}{s(t_1)-a)^2}\int_0^1|\tilde{u}_y|^2 dy + (u(1))^2+L^2 +1\right). 
%\tag{3.11}
%\end{align*}
%Therefore, by applying Lemma \ref{lem1} to (3.11) we can prove that (3.10) holds. 
Now, we consider the following Cauchy problem (CP):
$$ \begin{cases}
\tilde{u}_t+\partial \psi^t(\tilde{u}(t)) =\frac{y s_t(t)}{s(t)-a}f_y(t) \mbox{ in }L^2(0, 1)\\
\tilde{u}(0, y)=\tilde{u}_0(y) \mbox{ for }y\in [0, 1].
\end{cases}
$$
Here, we notice that  since $f\in L^2(0, T;H^1(0, 1))$ and $s \in W^{1,2}(0, T)$ then $\frac{yf_y(t)s_t(t)}{s(t)-a}\in L^2(0, T; L^2(0, 1))$.  
Then, by the general theory of evolution equations governed by time dependent subdifferentials (see \cite{Kenmochi} and references cited therein),  we conclude that 
(CP) has a solution $\tilde{u}$ on $[0, T]$ such that $\tilde{u}\in W^{1,2}(Q(T))$, $\psi^t(\tilde{u}(t)) \in L^{\infty}(0, T)$ and $t \to \psi^t(\tilde{u}(t))$ is absolutely continuous on $[0, T]$. This implies that $\tilde{u}$ is a unique solution of $(\mbox{AP})_{\tilde{u}_0, f, s, h}$ on $[0, T]$.
\end{proof}

\begin{lemma}
\label{lem3}
Let $T>0$, $L>a$ and $s\in W^{1,\infty}(0, T)$ with $a<s<L$ on $[0, T]$. If (A1)-(A5) hold, then, $(\mbox{AP})_{\tilde{u}_0, s, h}$ has a unique solution $\tilde{u}$ on $[0, T]$ such that $\tilde{u}\in W^{1,2}(Q(T))\cap L^{\infty}(0, T;H^1(0, 1))$. 
\end{lemma}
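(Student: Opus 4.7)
The proof proceeds by Banach's fixed point theorem layered on top of Lemma \ref{lem2}. The idea is to view the linear problem $(\mbox{AP})_{\tilde{u}_0, f, s, h}$ as defining a map $\Gamma: f \mapsto \tilde{u}$ and to seek a fixed point of $\Gamma$: such a fixed point will be a solution of $(\mbox{AP})_{\tilde{u}_0, s, h}$ because it satisfies the nonlinear equation in which $f_y$ is replaced by $\tilde{u}_y$.

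First, I would fix $T'\in (0,T]$ and $M>0$ to be tuned later and introduce the closed convex set
\begin{align*}
K_{T',M}:=\{f\in W^{1,2}(Q(T'))\cap L^{\infty}(0,T';H^1(0,1)) : f(0,\cdot)=\tilde{u}_0,\ |f|_{L^{\infty}(0,T';H^1(0,1))}\leq M\}.
\end{align*}
Since $s\in W^{1,\infty}(0,T)$ with $\delta_s \leq s-a\leq L-a$, the forcing $\frac{y s_t}{s-a}f_y$ lies in $L^2(Q(T'))$ for every $f\in K_{T',M}$, so Lemma \ref{lem2} defines $\Gamma(f):=\tilde{u}$ and provides the regularity $\tilde{u}\in W^{1,2}(Q(T'))\cap L^{\infty}(0,T';H^1(0,1))$ along with absolute continuity of $t\mapsto \psi^t(\tilde{u}(t))$.

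Second, I would show that for $M$ large and $T'$ small the map $\Gamma$ sends $K_{T',M}$ into itself. The natural energy estimate is obtained by multiplying the equation by $\tilde{u}_t$, integrating in $y$, and using the chain rule for $\psi^t(\tilde{u}(t))$ from Lemma \ref{lem2}. Combined with the coercivity inequalities (i)--(iii) of Lemma \ref{lem1}, this controls $|\tilde{u}|_{L^{\infty}(0,T';H^1(0,1))} + |\tilde{u}_t|_{L^2(Q(T'))}$ by a data-dependent constant plus a term linear in $|f|_{L^{\infty}(0,T';H^1(0,1))}$ and proportional to $T'$. Taking $M$ larger than twice the data-dependent part and then shrinking $T'$ absorbs the $f$-contribution, which gives $\Gamma(K_{T',M})\subset K_{T',M}$.

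Third, I would prove the contraction property. Given $f_1, f_2\in K_{T',M}$ with $\tilde{u}_i = \Gamma(f_i)$, the difference $w:=\tilde{u}_1-\tilde{u}_2$ solves
\begin{align*}
w_t-\frac{k}{(s(t)-a)^2}w_{yy}=\frac{y s_t(t)}{s(t)-a}(f_1-f_2)_y
\end{align*}
with boundary conditions obtained by subtracting the nonlinear boundary terms evaluated at $\tilde{u}_1$ and $\tilde{u}_2$. Testing with $w$ and integrating by parts, the boundary contributions at $y=0,1$ are handled by the Lipschitz continuity of $\beta$ from (A3), the Lipschitz continuity of $\sigma$ (constant $1$ by construction), and the uniform $L^{\infty}$ bound on $\tilde{u}_i(\cdot,0), \tilde{u}_i(\cdot,1)$ coming from $H^1(0,1)\hookrightarrow C([0,1])$ and membership in $K_{T',M}$. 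Together with Young's interpolation on the trace, these produce bounds of the form $\varepsilon|w_y|_{L^2}^2 + C_\varepsilon|w|_{L^2}^2$. The interior right-hand side is estimated by $|s_t|_{L^{\infty}}$ and Young, giving
\begin{align*}
\frac{1}{2}\frac{d}{dt}|w|_{L^2(0,1)}^2 + \frac{k}{2(L-a)^2}|w_y|_{L^2(0,1)}^2 \leq C|w|_{L^2(0,1)}^2 + C|(f_1-f_2)_y|_{L^2(0,1)}^2.
\end{align*}
Gronwall on $(0,T')$ yields $|w|_{L^{\infty}(0,T';L^2)}^2 + |w_y|_{L^2(Q(T'))}^2 \leq \kappa(T')|f_1-f_2|_{L^2(0,T';H^1(0,1))}^2$ with $\kappa(T')\to 0$ as $T'\to 0$, hence $\Gamma$ is a strict contraction on $K_{T',M}$ with respect to the $L^2(0,T';H^1(0,1))$ norm for $T'$ sufficiently small. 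The resulting unique fixed point is the unique solution of $(\mbox{AP})_{\tilde{u}_0,s,h}$ on $[0,T']$; I would then iterate on adjacent intervals of length $T'$, using that the a priori bounds of Step~2 do not blow up in finite time, to cover $[0,T]$.

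The main obstacle I anticipate is the boundary term at $y=1$: the difference $a_0\sigma(\tilde{u}_1(1))(\sigma(\tilde{u}_1(1))-\varphi(s))-a_0\sigma(\tilde{u}_2(1))(\sigma(\tilde{u}_2(1))-\varphi(s))$ contains a quadratic piece in $\sigma(\tilde{u}_i(1))$, so a usable Lipschitz estimate requires a uniform $L^{\infty}$ bound on $\tilde{u}_i(\cdot,1)$. This forces one to carry out the a priori $H^1$-bound of Step~2 first and to run the contraction only after both iterates are known to lie in the absorbing ball $K_{T',M}$; this is also why the contraction is measured in a norm weaker than the one defining $K_{T',M}$.
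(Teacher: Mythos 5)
Your overall scheme is the one the paper uses: define $\Gamma(f)=\tilde u$ via Lemma \ref{lem2}, prove $\Gamma$ is a contraction on a short interval by an $L^2$-energy estimate on the difference of two iterates, and continue to $[0,T]$. The one genuine difference is how you treat what you correctly flag as the main obstacle, the quadratic boundary term at $y=1$. You insert an extra layer (an invariant ball $K_{T',M}$ in $L^{\infty}(0,T';H^1)$, established by testing with $\tilde u_t$) so as to get a uniform trace bound and then Lipschitz-estimate the difference $\sigma(\tilde u_1(1))^2-\sigma(\tilde u_2(1))^2$. The paper avoids this entirely by monotonicity: writing $w=\tilde u_1-\tilde u_2$, the quadratic part contributes
\begin{align*}
\bigl(\sigma(\tilde u_1(t,1))^2-\sigma(\tilde u_2(t,1))^2\bigr)w(t,1)
=\bigl(\sigma(\tilde u_1(t,1))+\sigma(\tilde u_2(t,1))\bigr)\bigl(\sigma(\tilde u_1(t,1))-\sigma(\tilde u_2(t,1))\bigr)w(t,1)\geq 0,
\end{align*}
since $\sigma\geq 0$ is nondecreasing and $1$-Lipschitz; this term is simply dropped, and only the piece $\varphi(s(t))(\sigma(\tilde u_1(t,1))-\sigma(\tilde u_2(t,1)))w(t,1)\leq c_{\varphi}|w(t,1)|^2$ survives (the $\beta$-term at $y=0$ is handled the same way). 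This is exactly the convexity of $\psi^t$ at work, and it buys something concrete: the contraction constant, and hence the interval length $T_1$, is independent of the size of the data, so the continuation to $[0,T]$ is immediate by repetition. In your version $T'$ depends on $M$, which depends on the local $H^1$ size of the data at the start of each subinterval, so your continuation step genuinely requires the global-in-time a priori bound you allude to (it does hold, by the $\tilde u_t$-estimate, but it is an additional thing to verify). Your argument is correct as a whole; the paper's route is lighter because the sign structure of the boundary nonlinearity does the work that your absorbing ball does.
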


\begin{proof}
By Lemma \ref{lem2}, we can define the solution operator $\Gamma_T(f)=\tilde{u}$, where 
$\tilde{u}$ is a unique solution of $(\mbox{AP})_{\tilde{u}_0, f, s, h}$ for given $f\in W^{1,2}(Q(T))\cap L^{\infty}(0, T;H^1(0, 1))$. 
Now, for $i=1,2$ we put $\Gamma(f_i)=\tilde{u}_i$ and $f=f_1-f_2$ and $\tilde{u}=\tilde{u}_1-\tilde{u}_2$. 
Then, we have 
\begin{align*}
& \frac{1}{2}\frac{d}{dt}|\tilde{u}|^2_{L^2(0, 1)} - \int_0^1 \frac{k}{(s(t)-a)^2}\tilde{u}_{yy} \tilde{u} dy = \int_0^1 \frac{y s_t}{s(t)-a}f_{y} \tilde{u}dy.\tag{3.11}
\end{align*}
Using the structure of the boundary conditions, we obtain 
\begin{align*}
& - \int_0^1 \frac{k}{(s(t)-a)^2}\tilde{u}_{yy} \tilde{u} dy \\
& = -\frac{k}{(s(t)-a)^2}\tilde{u}_y(t, 1)\tilde{u}(t, 1)+\frac{k}{(s(t)-a)^2}\tilde{u}_y(t, 0)\tilde{u}(t, 0) + \frac{k}{(s(t)-a)^2}\int_0^1|\tilde{u}_y(t)|^2 dy\\
& =\frac{a_0}{s(t)-a}\biggl(\sigma(\tilde{u}_1(t, 1))(\sigma(\tilde{u}_1(t, 1))-\varphi(s(t)))-\sigma(\tilde{u}_2(t, 1))(\sigma(\tilde{u}_2(t, 1))-\varphi(s(t)))\biggr)\tilde{u}(t, 1)\\
& -\frac{1}{s(t)-a}\biggl(\beta(h(t)-H\tilde{u}_1(t, 0))-\beta(h(t)-H\tilde{u}_2(t, 0))\biggr)\tilde{u}(t, 0) + \frac{k}{(s(t)-a)^2}\int_0^1|\tilde{u}_y(t)|^2 dy\\
& \geq  -\frac{a_0}{s(t)-a}\varphi(s(t))|\tilde{u}(t, 1)|^2 - \frac{c_{\beta}H}{s(t)-a}|\tilde{u}(t, 0)|^2 + \frac{k}{(s(t)-a)^2}\int_0^1|\tilde{u}_y(t)|^2 dy. 
\end{align*}
Combining this inequality with (3.11), it follows that  
\begin{align*}
& \frac{1}{2}\frac{d}{dt}|\tilde{u}(t)|^2_{L^2(0, 1)}  + \frac{k}{(s(t)-a)^2}\int_0^1|\tilde{u}_y(t)|^2 dy\\
= &\int_0^1 \frac{ys_t(t)}{s(t)-a}f_{y}(t) \tilde{u}(t)dy +\frac{a_0}{s(t)-a}\varphi(s(t))|\tilde{u}(t, 1)|^2 +  \frac{c_{\beta}H}{s(t)-a}|\tilde{u}(t, 0)|^2.
\tag{3.12}
\end{align*}
Here, we use the Sobolev's embedding therem in one dimensional case: 
\begin{align*}
& |u(y)|^2 \leq C_e|u|_{H^1(0, 1)}|u|_{L^2(0, 1)} \mbox{ for }u\in H^1(0, 1) \mbox{ and }y\in [0, 1],
\tag{3.13}
\end{align*}
where $C_e$ is a positive constant in Sobolev's embedding. By using (3.13), we have 
\begin{align*}
& \frac{1}{2}\frac{d}{dt}|\tilde{u}(t)|^2_{L^2(0, 1)}  + \frac{k}{(s(t)-a)^2}\int_0^1|u_y(t)|^2 dy\\
= & \int_0^1 \frac{ys_t(t)}{s(t)-a}f_{y}(t) \tilde{u}(t)dy + C_e\left(\frac{a_0c_{\varphi}}{s(t)-a} + \frac{c_{\beta}H}{s(t)-a}\right)|\tilde{u}(t)|_{H^1(0, 1)}|\tilde{u}(t)|_{L^2(0, 1)}. 
\tag{3.14}
\end{align*}
Taking $C_2=C_e(a_0c_{\varphi}+c_{\beta}H)$ and using Young's inequality leads to  
\begin{align*}
& \int_0^1 \frac{y s_t(t)}{s(t)-a}f_{y}(t) \tilde{u}(t)dy \\
\leq & |s_t|_{L^{\infty}(0, T)}|\tilde{u}(t)|_{L^2(0, 1)} \left( \int_0^1\frac{1}{(s(t)-a)^2}|f_y(t)|^2dy \right)^{1/2}, \\
%\leq & |\tilde{u}(t)|^2_{L^2(0, 1)} \int_0^1\frac{1}{(s(t)-a)^2}|f_y(t)|^2dy\\
& \frac{C_2}{s(t)-a}|\tilde{u}|_{H^1(0, 1)}|\tilde{u}|_{L^2(0, 1)} \leq \frac{C_2}{s(t)-a}(|\tilde{u}_y|_{L^2(0, 1)}|\tilde{u}|_{L^2(0, 1)} + |\tilde{u}|^2_{L^2(0, 1)})\\
\leq & \frac{k}{2(s(t)-a)^2}|\tilde{u}_y|^2_{L^2(0, 1)} + \left(\frac{C^2_2}{2k} +\frac{C_2}{s(t)-a}\right)|\tilde{u}|^2_{L^2(0, 1)}.
\end{align*}
Now, we put $\delta_s$ such that $s(t)-a\geq \delta_s$ for $t\in [0, T]$. 
By (3.14), we obtain 
\begin{align*}
& \frac{1}{2}\frac{d}{dt}|\tilde{u}(t)|^2_{L^2(0, 1)}  + \frac{k}{2(s(t)-a)^2}\int_0^1|\tilde{u}_y(t)|^2 dy\\
\leq & \frac{|f_y(t)|^2_{L^2(0, 1)}}{2}+ 
\left(\frac{|s_t|^2_{L^{\infty}(0, T)}}{2\delta^2_s}+\frac{C^2_2}{2k}+\frac{C_2}{\delta_s}\right)|\tilde{u}(t)|^2_{L^2(0, 1)}.
\tag{3.15}
\end{align*}
%By setting $F(t)=\int_0^t(\frac{|s_t(\tau)|^2}{2\delta^2_s} + \frac{k C^2_2}{\delta_s}+\frac{C_2}{\delta_s})dt$ and 
%%%%Now, by integrating (3.15) over $[0, t]$ we are led to
%\begin{align*}
%& F(t) |\tilde{u}(t)|^2 + \frac{k F(t)}{2(s(t)-a)^2}\int_0^t \int_0^1|\tilde{u}_y(\tau)|^2 dyd\tau\\
%& \leq \frac{F(t)}{2}\int_0^t \int_0^1|f_y(t)|^2 dyd\tau + F(t)\int_0^tF(\tau)|\tilde{u}(\tau)|^2_{L^2(0, 1)} d\tau.
%\end{align*}
%By applying Gronwall's inequality, we derive 
%\begin{align*}
%& \int_0^t F(\tau)|\tilde{u}(\tau)|^2_{L^2(0, 1)}d\tau \\
%\leq & \left(T\int_0^t \int_0^1|f_y(\tau)|^2dy\right) e^{\int_0^tF(\tau)d\tau} \mbox{ for }t\in [0, T]. \tag{3.21}
%\end{align*}
%Since $s_t\in L^2(0, T)$, we note the $F\in L^2(0, T)$. Finally, by integrating (3.20) over $[0, t]$ we have 
%%\begin{align*}
%%& \frac{1}{2}|\tilde{u}(t)|^2_H  + \int_0^t \int_0^1\frac{k}{2(s(t)-a)^2}|\tilde{u}_y(\tau)|^2dy d\tau \leq \eta \frac{|s_t|^2_{L^{\infty}(0, T)}|\tilde{u}|^2_{L^{\infty}(0, t; L^2(0, 1)}T}{\delta^2_s} \\
%%& + \frac{t}{\eta}\int_0^t |f_y(\tau)|^2_{L^2(0, 1)}d\tau + t \left(\frac{C^2_2}{2k}+\frac{C_2}{\delta_s}\right)|\tilde{u}|^2_{L^{\infty}(0, t; L^2(0, 1))}, 
%%\tag{3.16}
%%\end{align*}
%%where $\eta$ is an arbitrary positive number. 
%%Since $s(t)-a<L-a$, by taking a suitable number $\eta=\eta_0$ and 
Now, by setting $$I(t):=\frac{1}{2}|\tilde{u}(t)|^2_{L^2(0, 1)}+ \frac{k}{2(L-a)^2}\int_0^t|\tilde{u}_y(\tau)|^2_{L^2(0, 1)}d\tau$$ for $t\in [0, T]$, we have 
\begin{align*}
& \frac{d}{dt}I(t) \leq \frac{|f_y(t)|^2_{L^2(0, 1)}}{2} + \left(\frac{|s_t|^2_{L^{\infty}(0, T)}}{2\delta^2_s} + \frac{ C^2_2}{2k}+\frac{C_2}{\delta_s} \right)I(t). 
\tag{3.16}
\end{align*}
Denote by $C_3$ the coefficient of $I(t)$ arising in the right-hand side. Using Gronwall's inequality to (3.16) gives
\begin{align*}
& I(t) \leq \left( \frac{1}{2}\int_0^t|f_y(\tau)|^2_{L^2(0, 1)}d\tau \right) e^{C_3T} \mbox{ for }t\in [0, T].
\end{align*}
This implies that that there exists a small $T_1\leq T$ such that $\Gamma_{T_1}$ is a contraction mapping on $W^{1,2}(Q(T))\cap L^{\infty}(0, T;H^1(0, 1))$. Therefore, by Banach's fixed point theorem we can find $\tilde{u}\in W^{1,2}(Q(T))\cap L^{\infty}(0, T;H^1(0, 1))$ such that 
$\Gamma_{T_1}(\tilde{u})=\tilde{u}$. In other words, we can find a solution $\tilde{u}$ of $(\mbox{AP})_{\tilde{u}_0, s, h}$ on $[0, T_1]$. Since $T_1$ is indepedent of the choice of initial value, by repeating the argument of the local existence result, we can extend the solution $\tilde{u}$ beyond $T_1$. This argument completes the proof of the Lemma.
\end{proof}

As next step,  for given $s\in W^{1,2}(0, T)$ with $a<s<L$ on $[0, T]$, we construct a solution to problem $(\mbox{AP})_{\tilde{u}_0, s, h}$.

\begin{lemma}
\label{lem4}
Let $T>0$ and $L>a$. If (A1)-(A5) hold, then, for given $s\in W^{1,2}(0, T)$ with $a<s<L$ on $[0, T]$, the problem $(\mbox{AP})_{\tilde{u}_0, s, h}$ has a unique solution $\tilde{u}$ on $[0, T]$.
\end{lemma}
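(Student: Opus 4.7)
The plan is to reduce Lemma~\ref{lem4} to Lemma~\ref{lem3} by an approximation-and-compactness argument, treating the lower regularity $s\in W^{1,2}(0,T)$ as a perturbation of the $W^{1,\infty}$ case. First I would mollify $s$ in time to produce a sequence $\{s_n\}\subset W^{1,\infty}(0,T)$ with $s_n\to s$ in $W^{1,2}(0,T)$, $s_n(0)=s_0$, and $a+\tfrac12 \delta_s\le s_n\le L$ uniformly (this can be arranged by a standard convolution combined with a truncation near the bounds, possibly shrinking $\delta_s$). Lemma~\ref{lem3} then produces, for each $n$, a unique solution $\tilde u_n\in W^{1,2}(Q(T))\cap L^\infty(0,T;H^1(0,1))$ of $(\mathrm{AP})_{\tilde u_0, s_n, h}$.

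Next I would derive a priori estimates on $\tilde u_n$ uniform in $n$. The natural energy identity comes from multiplying the PDE by $\tilde u_{n,t}$: this produces $|\tilde u_{n,t}|_{L^2}^2$ on the left, a time derivative of $\psi^t(\tilde u_n)$ (thanks to the subdifferential structure identified in Lemma~\ref{lem2}), an internal ``parameter-differentiation'' term coming from the $t$-dependence of $\psi^t$ which by estimates (3.9)--(3.10) is controlled by $(|s_{n,t}|+|h_t|)\,(1+\psi^{t}(\tilde u_n))$, and the forcing $\frac{y s_{n,t}}{s_n-a}\tilde u_{n,y}\,\tilde u_{n,t}$. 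Using Young's inequality to absorb $\tfrac12|\tilde u_{n,t}|_{L^2}^2$ on the left and Lemma~\ref{lem1}(1) to bootstrap $|\tilde u_{n,y}|_{L^2}^2$ via $\psi^t(\tilde u_n)$, Gronwall's lemma (in its integral form, since $|s_{n,t}|\in L^2$ not $L^\infty$) yields uniform bounds
\[
\sup_n \Bigl(|\tilde u_n|_{L^\infty(0,T;H^1(0,1))}+|\tilde u_{n,t}|_{L^2(Q(T))}+\|\psi^{(\cdot)}(\tilde u_n)\|_{L^\infty(0,T)}\Bigr)<\infty.
\]
An $L^\infty$ bound on $\tilde u_n$ then follows from the one-dimensional Sobolev embedding $H^1(0,1)\hookrightarrow C([0,1])$.

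Armed with these bounds, I would extract a subsequence with $\tilde u_n\rightharpoonup \tilde u$ weakly in $W^{1,2}(Q(T))$ and weakly-$*$ in $L^\infty(0,T;H^1(0,1))$. The Aubin--Lions lemma gives strong convergence in $C([0,T];L^2(0,1))$ and, together with the $H^1$-bound in space, strong convergence of the traces $\tilde u_n(\cdot,0),\tilde u_n(\cdot,1)$ in, say, $L^2(0,T)$. This allows passage to the limit in the nonlinear boundary terms $\beta(h-H\tilde u_n(\cdot,0))$ and $a_0 \tilde u_n(\cdot,1)(\tilde u_n(\cdot,1)-\varphi(s_n))$ as well as in the interior forcing $\frac{y s_{n,t}}{s_n-a}\tilde u_{n,y}$: here $\tilde u_{n,y}\rightharpoonup \tilde u_y$ weakly in $L^2(Q(T))$ and $s_{n,t}\to s_t$ strongly in $L^2(0,T)$, and $\frac{1}{s_n-a}\to \frac{1}{s-a}$ uniformly, so the product converges at least in the sense of distributions, which is enough after testing. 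In the limit one obtains that $\tilde u$ solves $(\mathrm{AP})_{\tilde u_0, s, h}$; moreover, replacing $\sigma(\tilde u(\cdot,1))$ by $\tilde u(\cdot,1)$ is justified once one notes that $\tilde u(\cdot,1)\ge \varphi(a)$ (inherited from $D(\psi^t)$ via the strong trace convergence).

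Uniqueness is handled exactly as in Lemma~\ref{lem3}: subtract two solutions, test the difference equation by the difference itself, bound the boundary contributions by Lipschitz constants of $\beta$, $\varphi$, and the quadratic term (which is locally Lipschitz thanks to the $L^\infty$ bound just obtained), then close with Gronwall. The hard step will be the passage to the limit in the forcing $\frac{y s_{n,t}}{s_n-a}\tilde u_{n,y}$, since $s_{n,t}$ is only $L^2$ in time and $\tilde u_{n,y}$ converges only weakly; one circumvents this by integrating against a test function and exploiting the strong-weak pairing, which works provided the uniform bounds above are available. Once this step is carried out, Lemma~\ref{lem4} follows.
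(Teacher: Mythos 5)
Your proposal follows essentially the same route as the paper: approximate $s$ by $W^{1,\infty}$ functions $s_n$ with a uniform lower bound $s_n-a\ge\delta$, invoke Lemma~\ref{lem3} for each $n$, derive uniform bounds in $W^{1,2}(0,T;L^2(0,1))\cap L^\infty(0,T;H^1(0,1))$ via the energy identity obtained by testing with $\tilde u_{n,t}$ (the paper carries this out rigorously through difference quotients of $\psi^t(\tilde u_n(t))$, controlling the time-dependence of $\psi^t$ exactly as you indicate), and then pass to the limit using weak/weak-$*$ compactness together with uniform convergence of traces. The only cosmetic differences are that you name Aubin--Lions explicitly where the paper simply asserts convergence in $C(\overline{Q(T)})$, and that you spell out the uniqueness step, which the paper leaves implicit.
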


\begin{proof}
We choose a sequence $\{s_n\} \subset W^{1, \infty}(0, T)$ and $a<\delta <L$ satisfying $s_n(t)-a \geq \delta$ on $[0, T]$ for each $n\in \mathbb{N}$, 
$s_n \to s$ in $W^{1,2}(0, T)$ as $n \to \infty$. By Lemma \ref{lem3} we can take a sequence $\{\tilde{u}_n\}$ of solutions to $(\mbox{AP})_{\tilde{u}_0, s_n, h}$ on $[0, T]$. Then, we see that $t \to \psi^t(\tilde{u}_n(t))$ is absolutely continuous on $[0, T]$ so that $t \to \frac{k}{(s_n(t)-a)^2}|\tilde{u}_{ny}(t)|^2_{L^2(0, 1)}$ is continuous on $[0, T]$. 
First, we have 
\begin{align*}
& \frac{1}{2}\frac{d}{dt}|\tilde{u}_n(t)|^2_{L^2(0, 1)} -\int_0^1 \frac{k}{(s_n(t)-a)^2}\tilde{u}_{nyy}(t) \tilde{u}_n(t) dy = \int_0^1 \frac{y s_{nt}(t)}{s_n(t)-a} \tilde{u}_{ny}(t) \tilde{u}_n(t) dy
\end{align*}
For the second term in the left hand side, it holds that 
\begin{align*}
& -\int_0^1 \frac{k}{(s_n(t)-a)^2}\tilde{u}_{nyy}(t) \tilde{u}_n(t) dy\\
& =\frac{1}{s_n(t)-a}a_0\sigma(\tilde{u}_n(t, 1))(\sigma(\tilde{u}_n(t, 1))-\varphi(s_n(t)))\tilde{u}_n(t, 1)\\
& -\frac{1}{s_n(t)-a}\beta(h(t)-H\tilde{u}_n(t, 0))\tilde{u}_n(t, 0) + \frac{k}{(s_n(t)-a)^2}\int_0^1|\tilde{u}_{ny}(t)|^2 dy.
\end{align*}
%By $b\varphi(a) \geq \varphi(s_n(t))$ for $t\in [0, T]$, we note that the first term in the right hand side is positive.
Accordingly, by $a_0(\sigma(\tilde{u}_n(t, 1)))^2\tilde{u}_n(t, 1)\geq 0$ we obtain that 
\begin{align*}
& \frac{1}{2}\frac{d}{dt}|\tilde{u}_n(t)|^2_{L^2(0, 1)} + \frac{k}{(s_n(t)-a)^2}\int_0^1|\tilde{u}_{ny}(t)|^2 dy \\
\leq &  \int_0^1 \frac{y s_{nt}(t)}{s_n(t)-a} \tilde{u}_{ny}(t) \tilde{u}_n(t) dy + \frac{1}{s_n(t)-a}a_0\varphi(s_n(t))\sigma(\tilde{u}_n(t, 1))\tilde{u}_n(t, 1) \\
& + \frac{1}{s_n(t)-a}\beta(h(t)-H\tilde{u}_n(t, 0))\tilde{u}_n(t, 0) \mbox{ for} t\in [0, T].
\tag{3.17}
\end{align*}
Using (3.13) it follows that 
\begin{align*}
& \int_0^1 \frac{y s_{nt}(t)}{s_n(t)-a} \tilde{u}_{ny}(t) \tilde{u}_n(t) dy \leq \frac{k}{4(s_n(t)-a)^2}\int_0^1|\tilde{u}_{ny}(t)|^2dy + \frac{|s_{nt}(t)|^2}{k}\int_0^1|\tilde{u}_n(t)|^2dy,
\end{align*}
and 
\begin{align*}
& \frac{1}{s_n(t)-a}a_0\varphi(s(t))\sigma(\tilde{u}_n(t, 1)\tilde{u}_n(t, 1)\leq \frac{a_0c_{\varphi}}{s_n(t)-a}\biggl(|\tilde{u}_n(t, 1)|^2 + \tilde{u}_n(t, 1)\varphi(a)\biggr)\\
\leq & \frac{a_0c_{\varphi}}{s_n(t)-a}\biggl(\frac{3}{2}|\tilde{u}_n(t, 1)|^2+\frac{\varphi^2(a)}{2}\biggr)\\
\leq & \frac{3a_0c_{\varphi}C_e}{2(s_n(t)-a)}\biggl(|\tilde{u}_{ny}(t)|_{L^2(0, 1)}|\tilde{u}_n(t)|_{L^2(0, 1)} + |\tilde{u}_n(t)|^2_{L^2(0, 1)}\biggr)
+ \frac{a_0c_{\varphi}}{s_n(t)-a}\frac{\varphi^2(a)}{2}\\
\leq & \frac{k}{4(s_n(t)-a)^2}|\tilde{u}_{ny}(t)|^2_{L^2(0, 1)} + \biggl(\frac{(3a_0c_{\varphi}C_e)^2}{4k}+\frac{3a_0c_{\varphi}C_e}{2\delta}\biggr)|\tilde{u}_n(t)|^2_{L^2(0, 1)} + \frac{a_0c_{\varphi}}{\delta }\frac{\varphi^2(a)}{2}, 
\end{align*}
and 
\begin{align*}
& \frac{1}{s_n(t)-a}\beta(h(t)-H\tilde{u}_n(t, 0))\tilde{u}_n(t, 0) \leq \frac{c_{\beta}}{s_n(t)-a}|\tilde{u}_n(t, 0)|\\
\leq & \frac{c_{\beta}C_e}{2(s_n(t)-a)}\biggl(|\tilde{u}_{ny}(t)|_{L^2(0, 1)}|\tilde{u}_n(t)|_{L^2(0, 1)} + |\tilde{u}_n(t)|^2_{L^2(0, 1)}\biggr) + 
\frac{c_{\beta}}{2(s_n(t)-a)}\\
\leq & \frac{k}{4(s_n(t)-a)^2}|\tilde{u}_{ny}(t)|^2_{L^2(0, 1)} + \left(\frac{(c_{\beta}C_e)^2}{4k} + \frac{c_{\beta}C_e}{2\delta}\right)|\tilde{u}_n(t)|^2_{L^2(0, 1)} + \frac{c_{\beta}}{2\delta}.
\end{align*}
As a consequence, we see from the above two estimates and (3.17) that 
\begin{align*}
& \frac{1}{2}\frac{d}{dt}|\tilde{u}_n(t)|^2_{L^2(0, 1)} + \frac{k}{4(s_n(t)-a)^2}\int_0^1|\tilde{u}_{ny}(t)|^2 dy \\
\leq & \left( \frac{|s_{nt}(t)|^2}{k}+ \frac{(3a_0c_{\varphi}C_e)^2}{4k}+\frac{3a_0c_{\beta}C_e}{2\delta} + \frac{(c_{\beta}C_e)^2}{4k} + \frac{c_{\beta}C_e}{2\delta}\right)|\tilde{u}_n(t)|^2_{L^2(0, 1)} \\
& + \frac{a_0c_{\varphi}}{\delta }\frac{\varphi^2(a)}{2} + \frac{c_{\beta}}{2\delta} \mbox{ for }t\in[0, T].
%\tag{3.18}
\end{align*}
We denote now the coefficient of $|\tilde{u}_n|^2_{L^2(0, 1)}$ in the above inequality by $F(t)$. Then, $F\in L^1(0, T)$ and Gronwall's inequality yields that 
\begin{align*}
& \frac{1}{2}|\tilde{u}_n(t)|^2_{L^2(0, 1)} + \int_0^t \frac{k}{4(s_n(t)-a)^2}|\tilde{u}_{ny}(t)|^2_{L^2(0, 1)} d\tau \\
& \leq \left(\frac{1}{2}|\tilde{u}(0)|^2_{L^2(0, 1)} + \left(\frac{a_0c_{\varphi}}{\delta }\frac{\varphi^2(a)}{2} + \frac{c_{\beta}}{2\delta}\right)T \right)e^{\int_0^tF(\tau)d\tau} \mbox{ for }t\in [0, T].
\tag{3.18}
\end{align*}
%Therefore, using (3.13), (3.18) and the fact that $F\in L^2(0, 1)$, we have that 
%\begin{align*}
%& \int_0^t|\tilde{u}_n(\tau, y)|^2d\tau \leq C_e \left(|\tilde{u}_n|_{L^{\infty}(0, t;L^2(0, 1))}\int_0^t|\tilde{u}_{ny}(\tau)|_{L^2(0, 1)}d\tau +  t|%\tilde{u}_n|^2_{L^{\infty}(0, t;L^2(0, 1))}\right)\\
%\leq & C_e\left(\frac{2\sqrt{2}(L-a)}{\sqrt{k}}\sqrt{T}+2T\right)\times \\
%& \left(\frac{1}{2}|\tilde{u}(0)|^2_{L^2(0, 1)} + \left(\frac{a_0c_{\varphi}}{\delta }\frac{\varphi^2(a)}{2} + \frac{c_{\beta}}{2\delta}\right)T\right)e^{\int_0^TF(\tau)d\tau} \mbox{ for }y\in [0, 1] \mbox{ and }t\in [0, T]. 
%\tag{3.19}
%\end{align*}
%Here, we put the right hand side in (3.19) by $C_4=C_4(T)$. 
Next, for each $n\in \mathbb{N}$ and $h>0$, we can write
\begin{align*}
& \int_0^1 \tilde{u}_{nt}(t) \frac{\tilde{u}_n(t)-\tilde{u}_n(t-h)}{h}dy - \int_0^1 \frac{k}{(s_n(t)-a)^2} \tilde{u}_{nyy}(t)\frac{\tilde{u}_n(t)-\tilde{u}_n(t-h)}{h}dy\\
&=\int_0^1 \frac{y s_{nt}(t)}{s_n(t)-a}\tilde{u}_{ny}(t) \frac{\tilde{u}_n(t)-\tilde{u}_n(t-h)}{h}dy. 
\tag{3.19}
\end{align*}
For the second term of (3.19), we obtain
\begin{align*}
& - \int_0^1 \frac{k}{(s(t)-a)^2} \tilde{u}_{nyy}(t)\frac{\tilde{u}_n(t)-\tilde{u}_n(t-h)}{h}dy\\
=& -\frac{k\tilde{u}_{ny}(t, 1)}{(s_n(t)-a)^2}\frac{\tilde{u}_n(t, 1)-\tilde{u}_n(t-h, 1)}{h} + \frac{k\tilde{u}_{ny}(t, 0)}{(s_n(t)-a)^2}\frac{\tilde{u}_n(t, 0)-\tilde{u}_n(t-h, 0)}{h}\\
& +\int_0^1 \frac{k\tilde{u}_{ny}(t)}{(s(t)-a)^2}\frac{\tilde{u}_{ny}(t)-\tilde{u}_{ny}(t-h)}{h}dy.
\end{align*}
We name as $I_1$, $I_2$ and $I_3$ the three terms in the last identity. We proceed with estimating them from bellow. For the first term $I_1$, using the same notation $g_1$ and $g_2$ cf. the proof of  Lemma \ref{lem1}, it holds that 
\begin{align*}
I_1 & \geq \frac{1}{h} \frac{1}{s_n(t)-a}\left(\int_0^{\tilde{u}_n(t, 1)}a_0\sigma(\xi)(\sigma(\xi)-\varphi(s_n(t))d\xi - \int_0^{\tilde{u}_n(t-h, 1)}a_0\sigma(\xi)(\sigma(\xi)-\varphi(s_n(t))d\xi \right)\\
%\geq & g_1(s_n(t), \tilde{u}_n(t, 1))-g_1(s_n(t-h), \tilde{u}_n(t-h, 1)) \\
%& + g_1(s_n(t-h), \tilde{u}_n(t-h, 1))-g_1(s_n(t), \tilde{u}_n(t-h, 1))\\
=& \frac{g_1(s_n(t), \tilde{u}_n(t, 1))-g_1(s_n(t-h), \tilde{u}_n(t-h, 1))}{h}\\
& + \frac{1}{h}\left(\frac{1}{s_n(t-h)-a}-\frac{1}{s_n(t)-a}\right) \int_0^{\tilde{u}_n(t-h, 1)}a_0\sigma(\xi)(\sigma(\xi)-\varphi(s_n(t-h))d\xi \\
& + \frac{1}{h}\frac{1}{s_n(t)-a}\int_0^{\tilde{u}_n(t-h, 1)}\biggl(a_0\sigma(\xi)(\sigma(\xi)-\varphi(s_n(t-h))-a_0\sigma(\xi)(\sigma(\xi)-\varphi(s_n(t))) \biggr)d\xi.
\end{align*}
Next, for the term $I_2$ we have
\begin{align*}
I_2 & \geq \frac{1}{h}\frac{1}{s_n(t)-a}\left(-\int_0^{\tilde{u}_n(t, 0)}\beta(h(t)-H\xi)d\xi + \int_0^{\tilde{u}_n(t-h, 0)}\beta(h(t)-H\xi)d\xi \right)\\
=& \frac{g_2(s_n(t), h(t), \tilde{u}_n(t, 0))-g_2(s_n(t-h), h(t-h), \tilde{u}_n(t-h, 0))}{h}\\
& + \frac{1}{h}\left(-\frac{1}{s_n(t-h)-a}+\frac{1}{s_n(t)-a}\right)\int_0^{\tilde{u}_n(t-h, 0)}\beta(h(t-h)-H\xi)d\xi\\
& - \frac{1}{h}\frac{1}{s_n(t)-a}\int_0^{\tilde{u}_n(t-h, 0)}\biggl(\beta(h(t-h)-H\xi)-\beta(h(t)-H\xi)\biggr)d\xi
\end{align*}
The term $I_3$ can be dealt with as follows
\begin{align*}
I_3 & \geq \frac{1}{h}\frac{k}{2(s_n(t)-a)^2}\left(\int_0^1|\tilde{u}_{ny}(t)|^2dy-\int_0^1|\tilde{u}_{ny}(t-h)|^2dy\right)\\
& =\frac{1}{h}\left(\frac{k}{2(s_n(t)-a)^2}\int_0^1|\tilde{u}_{ny}(t)|^2dy-\frac{k}{2(s_n(t-h)-a)^2}\int_0^1|\tilde{u}_{ny}(t-h)|^2dy\right)\\
& +\frac{1}{h}\left(\frac{k}{2(s_n(t-h)-a)^2}-\frac{k}{2(s_n(t)-a)^2}\right) \int_0^1|\tilde{u}_{ny}(t-h)|^2dy
\end{align*} 
Combining all these lower bounds and using the fact that $t \to k/(s_n(t)-a)^2|\tilde{u}_{ny}(t)|^2$ is continuous on $[0, T]$, we obtain
\begin{align*}
& \liminf_{h\to 0}(I_1+I_2+I_3)\\
\geq & \frac{d}{dt}\psi^t(\tilde{u}_n(t)) + \frac{s_{nt}(t)}{(s_n(t)-a)^2}\int_0^{\tilde{u}_n(t, 1)}a_0\sigma(\xi)(\sigma(\xi)-\varphi(s_n(t))d\xi\\
& + \frac{1}{s_n(t)-a}\varphi'(s_n(t))s_{nt}(t)\int_0^{\tilde{u}_n(t, 1)}\sigma(\xi)d\xi + \frac{s_{nt}(t)}{(s_n(t)-a)^2}\int_0^{\tilde{u}_n(t, 0)}\beta(h(t)-H\xi)d\xi\\
& - \frac{1}{s_n(t)-a}\int_0^{\tilde{u}_n(t, 0)}\beta'(h(t)-H\xi)h_t(t) d\xi+ \frac{k s_{nt}(t)}{(s_n(t)-a)^3}\int_0^1|\tilde{u}_{ny}(t)|^2dy.
\end{align*}
Applying this result to (3.19) and letting $h\to 0$, we observe %that it follows that 
\begin{align*}
& |\tilde{u}_{nt}(t)|^2_{L^2(0, 1)} + \frac{d}{dt}\psi^t(\tilde{u}_n(t))\\
\leq & \int_0^1\frac{y s_{nt}(t)}{s_n(t)-a} \tilde{u}_{ny}(t)\tilde{u}_{nt}(t)dy + \frac{s_{nt}(t)}{(s_n(t)-a)^2} \left|\int_0^{\tilde{u}_n(t, 1)}a_0\sigma(\xi) (\varphi(s_n(t))-\sigma(\xi))d\xi \right| \\
& + \frac{|\varphi'(s_n(t))||s_{nt}(t)|}{s_n(t)-a}\int_0^{\tilde{u}_n(t, 1)}\sigma(\xi)d\xi + \frac{|s_{nt}(t)|}{(s_n(t)-a)^2}\int_0^{\tilde{u}_n(t, 0)}\beta(h(t)-H\xi)d\xi\\
& + \frac{1}{s_n(t)-a}\left| \int_0^{\tilde{u}_n(t, 0)}\beta'(h(t)-H\xi)h_t(t) d\xi\right| + \frac{k |s_{nt}(t)|}{(s_n(t)-a)^3}\int_0^1|\tilde{u}_{ny}(t)|^2dy. 
\end{align*}
Using Lemma \ref{lem1}, we  estimate now from above each of the terms $J_i$ for $1\leq i\leq 6$ that pinpoint each term from the the right-hand side of the above inequality. By using $\sigma(r)\leq |r|+\varphi(a)$ for $r\in \mathbb{R}$ the following upper bounds hold true:
\begin{align*}
J_1 & \leq \frac{1}{2}|\tilde{u}_{nt}(t)|^2_{L^2(0, 1)} + \frac{1}{2}\frac{|s_{nt}(t)|^2}{(s_n(t)-a)^2}|\tilde{u}_{ny}(t)|^2_{L^2(0, 1)}\\
& \leq \frac{1}{2}|\tilde{u}_{nt}(t)|^2_{L^2(0, 1)} + \frac{|s_{nt}(t)|^2}{k}\left(C_0\psi^t(\tilde{u}_n(t))+C_1\right), \\
J_2 & \leq \frac{a_0 |s_{nt}(t)|\varphi(s_n(t))}{2\delta^2}\left(\frac{|\tilde{u}_n(t, 1)|^2}{2} + \tilde{u}_n(t, 1)\varphi(a)\right), \\
& \leq \frac{a_0 |s_{nt}(t)|\varphi(s_n(t))}{2\delta^2}\left(|\tilde{u}_n(t, 1)|^2 + \frac{\varphi^2(a)}{2}\right), \\
J_3 & \leq \frac{c_{\varphi}}{\delta}|s_{nt}(t)|\left(\frac{|\tilde{u}_n(t, 1)|^2}{2} + \tilde{u}_n(t, 1)\varphi(a) \right)\\
& \leq \frac{c_{\varphi}}{\delta}|s_{nt}(t)|\left(|\tilde{u}_n(t, 1)|^2 + \frac{\varphi^2(a)}{2}\right), \\
J_4 & \leq \frac{|s_{nt}(t)|c_{\beta}}{\delta^2}|\tilde{u}_n(t, 0)| \leq \frac{c_{\beta}}{\delta^2} \left(\frac{|s_{nt}(t)|^2}{2}+\frac{|\tilde{u}_n(t, 0)|^2}{2}\right), \\
J_5 & \leq \frac{c_{\beta}}{\delta}|h_t(t)||\tilde{u}_n(t, 0)|\leq \frac{c_{\beta}}{\delta}\left(\frac{|h_{t}(t)|^2}{2}+\frac{|\tilde{u}_n(t, 0)|^2}{2}\right), \\
J_6 & \leq \frac{k|s_{nt}(t)|}{(s_n(t)-a)^3}\int_0^1|\tilde{u}_{ny}(t)|^2dy \leq \frac{2|s_{nt}(t)|}{\delta}\left(C_0\psi^t(\tilde{u}_n(t))+C_1\right).
\end{align*}
Finally, by combining all these estimates, we obtain that 
\begin{align*}
& \frac{1}{2}|\tilde{u}_{nt}(t)|^2_{L^2(0, 1)} + \frac{d}{dt}\psi^t(\tilde{u}_n(t))\\
\leq & \left (\frac{|s_{nt}(t)|^2}{k} + \frac{2|s_{nt}(t)|}{\delta} \right) (C_0\psi^t(\tilde{u}_n(t)+C_1)+ \frac{a_0 |s_{nt}(t)|c_{\varphi}}{2\delta^2}\left(|\tilde{u}_n(t, 1)|^2 + \frac{\varphi^2(a)}{2}\right)\\
& + \frac{c_{\beta}}{\delta^2} \frac{|s_{nt}(t)|^2}{2}
 + \frac{c_{\varphi}|s_{nt}(t)|}{\delta} \left(|\tilde{u}_n(t, 1)|^2 + \frac{\varphi^2(a)}{2}\right) \\
& + \left( \frac{c_{\varphi}}{\delta} + \frac{c_{\beta}}{\delta^2} \right) \frac{|\tilde{u}_n(t, 0)|^2}{2} + \frac{c_{\beta}}{\delta} \frac{|h_{t}(t)|^2}{2}
\mbox{ for }t\in [0, T].
\end{align*}
Therefore, by setting 
\begin{align*}
l(t)& :=\frac{|s_{nt}(t)|^2}{k} + \frac{2|s_{nt}(t)|}{\delta} + \frac{a_0 |s_{nt}(t)|c_{\varphi}}{2\delta^2} + \frac{c_{\varphi}|s_{nt}(t)|}{\delta} + \frac{1}{2}\left(\frac{c_{\varphi}}{\delta} + \frac{c_{\beta}}{\delta^2} \right) \\
& + \frac{\varphi^2(a)}{2}\left( \frac{a_0 |s_{nt}(t)|c_{\varphi}}{2\delta^2} + \frac{c_{\varphi}|s_{nt}(t)|}{\delta}\right)
\end{align*}
and using Gronwall's lemma, we have that 
\begin{align*}
& \frac{1}{2}\int_0^t|\tilde{u}_{nt}(\tau)|^2_{L^2(0, 1)}d\tau + \psi^t(\tilde{u}_n(t)) \\
\leq & \biggl[\psi^0({\tilde{u}(0)}) + \frac{c_{\beta}}{2\delta^2} \int_0^t |s_{nt}(t)|^2 d\tau  + \frac{c_{\beta}}{2\delta}\int_0^t |h_{t}(\tau)|^2 d\tau\\
& + (C_1+1)\int_0^t l(\tau)d\tau \biggr] e^{C_0\int_0^t l(\tau)d\tau}
\mbox{ for }t\in [0, T].
\tag{3.20}
\end{align*}
%Relation  (3.20) and $s_{nt}\in L^2(0, 1)$ ensure the inequality
%\begin{align*}
%& \int_0^t |s_{nt}(\tau)||\tilde{u}_n(\tau, 1)|d\tau \leq \frac{1}{2}\int_0^t|s_{nt}(\tau)|^2d\tau + \frac{1}{2}C_3 \mbox{ for }t\in [0, T],
%\end{align*}
%so that $l\in L^2(0, T)$. 
Therefore, by $l\in L^2(0, T)$ and combining the latter inequality with (A2) we see that the right hand side of (3.20) is bounded. From this result, we infer that the sequence $\{ \tilde{u}_{n} \}$ is bounded in $W^{1,2}(0, T;L^2(0, 1))$ and the sequence $\{ \psi^{(\cdot)}(\tilde{u}_n(\cdot))\}$ is bounded in $L^{\infty}(0, T)$. Finally, this result in combination with Lemma \ref{lem1}, (3.18) and (3.20) means that the sequence $\{ \tilde{u}_n \}$ is bounded in $W^{1,2}(0, T;L^2(0, 1))\cap L^{\infty}(0, T; H^1(0, 1))$.  Therefore, we can take a sequence $\{ n_k \}\subset \{ n\}$ such that for some $\tilde{u}\in W^{1,2}(0, T; L^2(0, 1))\cap L^{\infty}(0, T; H^1(0, 1))$, $\tilde{u}_n \to \tilde{u}$ weakly in $W^{1,2}(0, T; L^2(0, 1))$, weakly -* in $L^{\infty}(0, T; H^1(0, 1))$ and in $C(\overline{Q(T)})$ as $k \to \infty$. By letting $k\to \infty$, we get that 
$\tilde{u}$ is a solution of $(\mbox{AP})_{\tilde{u}_0, s, h}$ on $[0, T]$.
\end{proof}

\section{Local existence }
\label{LG}
In this section, using the results obtained in Section \ref{AP} we first show that (P)$_{\tilde{u}_0, s_0, h}$ has a solution locally in time. Throughout the rest of this section, we assume (A1)-(A5). 
Let $T>0$ and set $L>s_0$. We define the set 
\begin{align*}
& M(T, s_0, a'):=\{ s\in W^{1,2}(0, T) | a'\leq s <L \mbox{ on } [0, T], s(0)=s_0\}.
\end{align*}
Also, for given $s\in M(T, s_0, a')$, we define the operator $\Phi : M(T, s_0, a') \to V(T):=W^{1,2}(0, T; L^2(0, 1))\cap L^{\infty}(0, T; H^1(0, 1))$ by 
$\Phi (s)=\tilde{u}$, where $\tilde{u}$ is a solution of $(\mbox{AP})_{\tilde{u}_0, s, h}$, and the operator $\Gamma_{T}: M(T, s_0, a') \to W^{1,2}(0, T)$ by 
$\Gamma_{T}(s)=s_0 + \int_0^t a_0(\sigma(\Phi(s)(\tau, 1)-\varphi(s(\tau)))d\tau$ for $t\in [0, T]$. Moreover, for any $K>0$ we put 
\begin{align*}
& M_K(T):=\{s\in M(T, s_0, a') | \ |s|_{W^{1,2}(0, T)}\leq K \}.
\end{align*}

This setting is constructed such that, relying on (3.18) and (3.20) in Lemma \ref{lem4}, the inequality in the next Lemma holds true. 

\begin{lemma}
\label{lem5}
Let $T>0$ and $K>0$.  It holds that 
\begin{align*}
& |\Phi(s)|_{W^{1,2}(0, T;L^2(0, 1))} + |\Phi(s)|_{L^{\infty}(0, T;H^1(0, 1))} \leq C \mbox{ for }s\in M_K(T), 
\end{align*}
where $C=C(T,\tilde{u}_0, K, L, h)$ depending on $T$, $\tilde{u}_0$, $K$, $L$ and $h$.
\end{lemma}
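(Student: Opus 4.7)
The plan is to exploit the estimates (3.18) and (3.20) established during the proof of Lemma \ref{lem4}, tracking the dependence of the constants carefully on the data. Recall that $\Phi(s) = \tilde{u}$ was constructed as a weak / weak-$*$ limit of approximants $\tilde{u}_n$ associated to mollifications $s_n \in W^{1,\infty}(0,T)$ with $s_n \to s$ in $W^{1,2}(0,T)$ and, for $n$ large, $s_n(t)-a \geq \delta := (a'-a)/2 > 0$ uniformly in $t$. Thus it is enough to bound $\tilde{u}_n$ uniformly in $W^{1,2}(0,T;L^2(0,1)) \cap L^{\infty}(0,T;H^1(0,1))$ and then transfer the bound to $\Phi(s)$ by lower semicontinuity of the respective norms.

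For any $s \in M_K(T)$ one has $|s_t|_{L^2(0,T)} \leq K$, and the regularization may be chosen so that $|s_{nt}|_{L^2(0,T)} \leq 2K$. Inspecting (3.18), the coefficient $F(t)$ is of the form $|s_{nt}(t)|^2/k$ plus a constant depending only on $k$, $\delta$, $a_0$, $c_\varphi$, $c_\beta$, $C_e$, $H$; hence $F \in L^1(0,T)$ with integral controlled by $K$, $T$ and the structural parameters. Since $|\tilde{u}_n(0)|_{L^2(0,1)} = |\tilde{u}_0|_{L^2(0,1)}$, Gronwall's inequality yields a uniform bound on $|\tilde{u}_n|_{L^\infty(0,T;L^2(0,1))}$ together with $\int_0^T |\tilde{u}_{ny}(\tau)|^2_{L^2(0,1)} d\tau$.

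Turning to (3.20), the function $l(\tau)$ is at most quadratic in $|s_{nt}(\tau)|$, so $\int_0^T l(\tau) d\tau$ is bounded in terms of $K$, $T$ and the structural parameters. The contribution $\int_0^T |h_t(\tau)|^2 d\tau$ is majorized by $|h|^2_{W^{1,2}(0,T)}$ thanks to (A2), while the initial value $\psi^0(\tilde{u}(0))$ is estimated in terms of $|\tilde{u}_0|_{H^1(0,1)}$, $s_0$ and $|h|_{L^\infty(0,T)}$. Therefore (3.20) supplies a uniform bound on $|\tilde{u}_{nt}|_{L^2(0,T;L^2(0,1))}$ and on $\sup_{t\in[0,T]} \psi^t(\tilde{u}_n(t))$. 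Combining the last bound with Lemma \ref{lem1}(1)(iii) and the upper estimate $s_n(t)-a \leq L-a$ delivers a pointwise bound on $|\tilde{u}_{ny}(t)|_{L^2(0,1)}$ that is uniform in $n$ and in $t \in [0,T]$.

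The main obstacle, and what requires the careful bookkeeping sketched above, is to verify that \emph{no constant depends on $|s_t|_{L^\infty(0,T)}$}: the hypothesis $s \in W^{1,2}(0,T)$ only provides $L^2$-integrability of $s_t$, which is exactly why it is essential that $F$ and $l$ enter (3.18) and (3.20) through their $L^1(0,T)$-norms rather than their $L^\infty$-norms. Once the uniform bound on $\{\tilde{u}_n\}$ in $W^{1,2}(0,T;L^2(0,1)) \cap L^{\infty}(0,T;H^1(0,1))$ is in place, the weak lower semicontinuity of $|\cdot|_{W^{1,2}(0,T;L^2(0,1))}$ and the weak-$*$ lower semicontinuity of $|\cdot|_{L^\infty(0,T;H^1(0,1))}$ transfer it along the subsequence producing $\Phi(s)$, yielding the claimed estimate with $C = C(T,\tilde{u}_0,K,L,h)$.
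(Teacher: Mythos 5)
Your proposal is correct and follows essentially the same route as the paper, which states Lemma \ref{lem5} as a direct consequence of (3.18) and (3.20) from the proof of Lemma \ref{lem4}; you simply make explicit the bookkeeping (that $F$ and $l$ enter only through their $L^1(0,T)$-norms, hence are controlled by $K$, and the transfer to the limit by weak and weak-$*$ lower semicontinuity) that the paper leaves implicit. The only cosmetic remark is that, exactly as in the paper, the constant also depends on the lower bound $\delta=a'-a$ for $s-a$, which the statement suppresses.
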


By using Lemma \ref{lem5} we show that for some $T>0$, the mapping $\Gamma_{T}$ is a contraction mapping on the closed set of $M_K(T)$ for any $K>0$. 

\begin{lemma}
\label{lem6}
Let $a<a'\leq s_0$ and $K>0$. There exists a positive constant $T_1\leq T$ such that the mapping $\Gamma_{T_1}: M_K(T_1) \to M_K(T_1)$ is well defined. Furthermore,  
the maping $\Gamma_{T_1}$ is a contraction on the closed set $M_K(T_1)$ in $W^{1,2}(0, T)$.
\end{lemma}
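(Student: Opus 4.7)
The plan is to verify in turn that $\Gamma_{T_1}$ (i) maps $M_K(T_1)$ into itself and (ii) is a strict contraction on $M_K(T_1)$ in the $W^{1,2}(0,T_1)$ norm, for a sufficiently small $T_1 \leq T$.

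For the self-mapping property, fix $s \in M_K(T)$ and write $\tilde{u} = \Phi(s)$. By construction $\Gamma_{T}(s)(0) = s_0$. Lemma \ref{lem5} supplies a uniform bound on $\tilde{u}$ in $L^{\infty}(0,T;H^1(0,1))$; together with the Sobolev embedding $H^1(0,1) \hookrightarrow C([0,1])$ this yields a uniform constant $C_* = C_*(T,\tilde{u}_0,K,L,h)$ with $|\sigma(\tilde{u}(t,1))| \leq C_*$ for $t \in [0,T]$. Since $\varphi$ is bounded by $c_\varphi$, the integrand in the definition of $\Gamma_T(s)$ is uniformly bounded by $M := a_0(C_* + c_\varphi)$, hence $|\Gamma_{T_1}(s)(t) - s_0| \leq M T_1$ for $t \in [0,T_1]$. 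Choosing $T_1$ small enough and using $a < a' \leq s_0 < L$ forces $a' \leq \Gamma_{T_1}(s)(t) < L$ on $[0,T_1]$. For the norm bound, $|\partial_t \Gamma_{T_1}(s)|_{L^\infty(0,T_1)} \leq M$ gives $|\Gamma_{T_1}(s)|_{W^{1,2}(0,T_1)} \leq (s_0 + M T_1)T_1^{1/2} + M T_1^{1/2}$, which is bounded by $K$ after fixing $K$ large (depending on $s_0$, $T$) and then shrinking $T_1$ further if needed. Closedness of $M_K(T_1)$ in $W^{1,2}(0,T_1)$ follows from weak compactness of norm balls.

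For the contraction property, take $s_1,s_2 \in M_K(T_1)$ and set $\tilde{u}_i = \Phi(s_i)$, $\tilde{u} = \tilde{u}_1 - \tilde{u}_2$, $s = s_1 - s_2$. The key intermediate goal is a Lipschitz bound of the form
\begin{equation*}
|\tilde{u}(\cdot,1)|_{C([0,T_1])}^2 \,\leq\, \omega(T_1)\,|s|_{W^{1,2}(0,T_1)}^2,\qquad \omega(T_1)\to 0 \mbox{ as } T_1\to 0 .
\end{equation*}
To obtain this, subtract the equations for $\tilde{u}_1$ and $\tilde{u}_2$: $\tilde{u}$ solves a linear parabolic problem whose coefficients depend on $s_1,s_2$ through $(s_i-a)^{-2}$, $y s_{it}/(s_i-a)$, $\varphi(s_i)$, with inhomogeneous terms (both in the PDE and in the Robin conditions at $y=0,1$) that are linear combinations of $s$, $s_t$ and compositions such as $\varphi(s_1)-\varphi(s_2)$, all of which are controllable by $|s|_{W^{1,2}(0,T_1)}$ times bounds from Lemma \ref{lem5} and the Lipschitz constants $c_\varphi$, $c_\beta$. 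Testing with $\tilde{u}$ and handling the boundary contributions exactly as in the proof of Lemma \ref{lem3} (using monotonicity of $\sigma$ and of $r \mapsto -\beta(h-Hr)$, the Sobolev trace inequality (3.13), and Young's inequality) produces an estimate of the shape
\begin{equation*}
\frac{d}{dt}|\tilde{u}(t)|_{L^2(0,1)}^2 + \kappa|\tilde{u}_y(t)|_{L^2(0,1)}^2 \leq A(t)|\tilde{u}(t)|_{L^2(0,1)}^2 + B(t)\bigl(|s(t)|^2 + |s_t(t)|^2\bigr),
\end{equation*}
with $A,B \in L^1(0,T)$ depending only on the uniform bounds from Lemma \ref{lem5}. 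Gronwall's inequality, started from $\tilde{u}(0)=0$, yields $|\tilde{u}|_{L^\infty(0,T_1;L^2(0,1))}^2 + |\tilde{u}_y|_{L^2(Q(T_1))}^2 \lesssim T_1^{1/2}|s|_{W^{1,2}(0,T_1)}^2$; the Sobolev trace $|\tilde{u}(t,1)|^2 \leq C_e|\tilde{u}(t)|_{H^1(0,1)}|\tilde{u}(t)|_{L^2(0,1)}$ then delivers the claim with $\omega(T_1) = O(T_1^{1/4})$.

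With that in hand, Lipschitz continuity of $\sigma$ (constant $1$) and of $\varphi$ (constant $c_\varphi$) gives
\begin{equation*}
|\partial_t(\Gamma_{T_1}(s_1)-\Gamma_{T_1}(s_2))(t)| \leq a_0\bigl(|\tilde{u}(t,1)| + c_\varphi |s(t)|\bigr),
\end{equation*}
so integrating and combining with the previous bound produces
\begin{equation*}
|\Gamma_{T_1}(s_1)-\Gamma_{T_1}(s_2)|_{W^{1,2}(0,T_1)} \leq C\bigl(T_1^{1/2} + \omega(T_1)^{1/2}\bigr)|s|_{W^{1,2}(0,T_1)},
\end{equation*}
which is a strict contraction once $T_1$ is small enough. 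The main obstacle is precisely the intermediate Lipschitz estimate on $\Phi$: the coefficients depend nonlinearly on $s$ and $s_t$ enters the convection term, so one must be careful to absorb all boundary traces via the monotonicity structure already established in Lemma \ref{lem3} rather than bounding them crudely, otherwise the contraction factor cannot be driven below $1$ by shrinking $T_1$.
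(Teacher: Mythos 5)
Your strategy is the same as the paper's: the self-mapping property follows from the uniform bounds of Lemma \ref{lem5} on $\Phi(s)(\cdot,1)$, and the contraction follows from an $L^2$ energy estimate for $\tilde{u}_1-\tilde{u}_2$, Gronwall, and the trace inequality (3.13). Two intermediate claims in your sketch are misstated, although the argument survives once they are corrected. First, the claimed bound $|\tilde{u}(\cdot,1)|^2_{C([0,T_1])}\leq \omega(T_1)|s|^2_{W^{1,2}(0,T_1)}$ is stronger than your energy estimate can deliver: a sup-in-time bound on the trace would require control of $\sup_t|\tilde{u}(t)|_{H^1(0,1)}$, whereas Gronwall only gives $\tilde{u}\in L^\infty(0,T_1;L^2(0,1))$ and $\tilde{u}_y\in L^2(Q(T_1))$. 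What is actually needed downstream, since $\partial_t(\Gamma_{T_1}(s_1)-\Gamma_{T_1}(s_2))$ is measured in $L^2(0,T_1)$, is only the time-integrated trace bound $\int_0^{T_1}|\tilde{u}(t,1)|^2\,dt\leq C_e\bigl(\sqrt{T_1}\,|\tilde{u}|_{L^\infty(0,T_1;L^2)}|\tilde{u}_y|_{L^2(Q(T_1))}+T_1|\tilde{u}|^2_{L^\infty(0,T_1;L^2)}\bigr)$, which is exactly the paper's (4.13) and which does carry a vanishing factor. Second, the factor $T_1^{1/2}$ you attribute to the Gronwall step is not there: the source term integrates to $\int_0^{T_1}B(\tau)(|s|^2+|s_t|^2)\,d\tau\leq |B|_{L^\infty}|s|^2_{W^{1,2}(0,T_1)}$ with no smallness, because $|s_t|^2$ is merely integrable; compare the paper's (4.12), which likewise has no small factor. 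All the smallness in the contraction constant must therefore come from the time-integrated trace inequality (plus the $\varepsilon$-Young splitting), not from Gronwall. A last small point: enlarging $K$ is unnecessary for the self-map, since the lemma is stated for arbitrary $K>0$ and $|\Gamma_{T_1}(s)|_{W^{1,2}(0,T_1)}=O(\sqrt{T_1})$, so shrinking $T_1$ alone suffices.
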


\begin{proof}
For $T>0$ and $L>s_0$, let $s\in M(T, s_0, a')$ and $\tilde{u}=\Phi_{T}(s)$. Then, $\tilde{u}$ is a solution of $(\mbox{AP})_{\tilde{u}_0, s, h}$ so that $\sigma(\Phi(s)(t, 1)) \geq \varphi(a)$ for $t\in [0, T]$, and 
\begin{align*}
\Gamma_{T}(s)(t) &= s_0 + \int_0^t a_0(\sigma(\Phi(s)(\tau, 1))-\varphi(s(\tau)))d\tau \\
&\geq s_0 + a_0(\varphi(a)-c_{\varphi})t \mbox{ for }t\in [0, T].
\tag{4.1}
\end{align*}
Here, by (3.13) and Lemma \ref{lem5}, it follows that 
\begin{align*}
& \int_0^t |\tilde{u}(\tau, 1)|^2d\tau \leq  C_e \int_0^t (|\tilde{u}_y|_{L^2(0, 1)}|\tilde{u}|_{L^2(0, 1)} + |\tilde{u}|^2_{L^2(0, 1)})d\tau \\
\leq & C_e \left(|\tilde{u}|_{L^{\infty}(0, T;L^2(0, 1))} \sqrt{t}\left(\int_0^t |\tilde{u}_y|^2_{L^2(0, 1)}d\tau\right)^{1/2} + t |\tilde{u}|^2_{L^{\infty}(0, T;L^2(0, 1))}\right)\\
\leq & \sqrt{t}C_e (1+\sqrt{T})C^2.
\end{align*}
Then, we have that 
\begin{align*}
\Gamma_T(s) & \leq s_0 + a_0\sqrt{t}\biggl(\int_0^t |\Phi(s)(\tau, 1)|^2 d\tau \biggr)^{\frac{1}{2}}\\
& \leq s_0 +a_0t^{\frac{3}{4}}(C_e (1+\sqrt{T})C^2)^{\frac{1}{2}}. 
\tag{4.2}\\
\end{align*}
Hence, we obtain that 
\begin{align*}
& \int_0^t |\Gamma_T(s)|^2 d\tau 
%\leq 2s^2_0t + 2t a^2_0\int_0^t |\tilde{u}(\tau, 1)|^4d\tau \\
%\leq &  2s^2_0t + 4(a_0C_e)^2t \int_0^t (|\tilde{u}_y|^2_{L^2(0, 1)}|u|^2_{L^2(0, 1)} + |\tilde{u}|^4_{L^2(0, 1)})d\tau \\
\leq 2s^2_0t + 2a^2_0tT^\frac{3}{2}\biggl(C_e(1+\sqrt{T})C^2 \biggr)
\tag{4.3}
\end{align*}
and 
\begin{align*}
& \int_0^t |\Gamma'_T(s)|^2d\tau \leq a^2_0\int_0^t|\Phi(s)(\tau, 1))|^2 d\tau \\
%\leq & 2a^2\biggl(\int_0^t |\Phi(s)(\tau, 1)|^4d\tau + c^2_{\varphi}\int_0^t|\Phi(s)(\tau, 1)|^2d\tau \biggr)\\
%\leq & 4a^2_0 \biggl(\int_0^t |\Phi(s)(\tau, 1)|^2d\tau + \varphi^2(a)t\biggr)+ 2t(a_0c_{\varphi})^2 \\
\leq & a^2_0\sqrt{t}C_e(1+\sqrt{T})C^2.
\tag{4.4}
\end{align*}
Therefore,  by (4.1)-(4.4) we see that there exists $T_0<T$ such that $\Gamma_{T_0}(s)\in M_K(T_0)$. 
Next, let $\tilde{u}_1$ and $\tilde{u}_2$ for $s_1$ and $s_2\in M_K(T_0)$, respectively, and set $\tilde{u}=\tilde{u}_1-\tilde{u}_2$, $s=s_1-s_2$ and $\delta=a'-a$. 
Then, we have that 
\begin{align*}
& \frac{1}{2}\frac{d}{dt}|\tilde{u}(t)|^2_H - \int_0^1 \left( \frac{k}{(s_1(t)-a)^2}\tilde{u}_{1yy}(t)-\frac{k}{(s_2(t)-a)^2}\tilde{u}_{2yy}(t)\right)\tilde{u}(t)dy \\
&=\int_0^1 \left(\frac{y s_{1t}(t)}{s_1(t)-a} \tilde{u}_{1y}(t)-\frac{ys_{2t}(t)}{s_2(t)-a} \tilde{u}_{2y}(t)\right)\tilde{u}(t)dy.
\tag{4.5}
\end{align*}
Regarding the second term of the left hand side of (4.5), we write
\begin{align*}
& - \int_0^1 \left( \frac{k}{(s_1(t)-a)^2}\tilde{u}_{1yy}(t)-\frac{k}{(s_2(t)-a)^2}\tilde{u}_{2yy}(t)\right)\tilde{u}(t)dy\\
=& \int_0^1 \left(\frac{k}{(s_1(t)-a)^2}\tilde{u}_{1y}(t)-\frac{k}{(s_2(t)-a)^2}\tilde{u}_{2y}(t)\right) \tilde{u}_y(t) dy\\
& - \left (\frac{k}{(s_1(t)-a)^2}\tilde{u}_{1y}(t)(t, 1)-\frac{k}{(s_2(t)-a)^2}\tilde{u}_{2y}(t)(t, 1)\right)\tilde{u}(t, 1)\\
& + \left (\frac{k}{(s_1(t)-a)^2}\tilde{u}_{1y}(t, 0)-\frac{k}{(s_2(t)-a)^2}\tilde{u}_{2y}(t, 0)\right)\tilde{u}(t, 0)\\
=:& I_1+I_2+I_3.
\end{align*}
For the term $I_1$, it holds that  
\begin{align*}
& I_1= \frac{k}{(s_1(t)-a)^2}|\tilde{u}_{y}(t)|^2_{L^2(0, 1)}+ \int_0^1 \left(\frac{k}{(s_1(t)-a)^2}-\frac{k}{(s_2(t)-a)^2}\right)\tilde{u}_{2y}(t)\tilde{u}_y (t)dy\\
\geq & \frac{k}{(s_1(t)-a)^2}|\tilde{u}_{y}(t)|^2_{L^2(0, 1)} - \frac{2Lk|s(t)|}{\delta^3(s_1(t)-a)}|\tilde{u}_{2y}(t)|_{L^2(0, 1)}|\tilde{u}_y(t)|_{L^2(0, 1)} \\
\geq & \left(1-\frac{\eta}{2}\right)\frac{k}{(s_1(t)-a)^2}|\tilde{u}_{y}(t)|^2_{L^2(0, 1)} -\frac{k}{2\eta}\left(\frac{2L^2}{\delta^3}\right)^2|s(t)|^2|\tilde{u}_{2y}|^2_{L^2(0, 1)},
\end{align*}
where $\eta $ is arbitrary positive number. The term $I_2$ is handled  as follows:
\begin{align*}
& -\left (\frac{k}{(s_1(t)-a)^2}\tilde{u}_{1y}(t, 1)-\frac{k}{(s_2(t)-a)^2}\tilde{u}_{2y}(t, 1)\right)\tilde{u}(t, 1)\\
=&\left(\frac{a_0\sigma(\tilde{u}_1(t, 1))}{s_1(t)-a}(\sigma(\tilde{u}_1(t, 1))-\varphi (s_1(t))-\frac{a_0\sigma(\tilde{u}_2(t, 1))}{s_2(t)-a}(\sigma(\tilde{u}_2(t, 1))-\varphi (s_2(t))\right)\tilde{u}(t, 1)\\
=& \frac{a_0}{s_1(t)-a}\biggl(\sigma(\tilde{u}_1(t, 1))(\sigma(\tilde{u}_1(t, 1))-\varphi (s_1(t))-\sigma(\tilde{u}_2(t, 1))(\sigma(\tilde{u}_2(t, 1))-\varphi (s_2(t))\biggr)\tilde{u}(t, 1)\\
& + \left(\frac{1}{s_1(t)-a}-\frac{1}{s_2(t)-a}\right)a_0\sigma(\tilde{u}_2(t, 1))(\sigma(\tilde{u}_2(t, 1))-\varphi (s_2(t)))\tilde{u}(t, 1)\\
=& \frac{a_0}{s_1(t)-a}\biggl(\sigma(\tilde{u}_1(t, 1))-\sigma(\tilde{u}_2(t, 1))\biggr)(\sigma(\tilde{u}_1(t, 1))-\varphi (s_1(t)))\tilde{u}(t, 1)\\
& +\frac{a_0}{s_1(t)-a}\sigma(\tilde{u}_2(t, 1))\biggl(\sigma(\tilde{u}_1(t, 1))-\varphi (s_1(t))-\sigma(\tilde{u}_2(t, 1))+\varphi (s_2(t))\biggr)\tilde{u}(t, 1)\\
& + \left(\frac{1}{s_1(t)-a}-\frac{1}{s_2(t)-a}\right)a_0\sigma(\tilde{u}_2(t, 1))(\sigma(\tilde{u}_2(t, 1))-\varphi (s_2(t)))\tilde{u}(t, 1)\\
&=: I_{21}+I_{22}+I_{23}.
%=& \left(\frac{k}{(s_1(t)-a)^2}-\frac{k}{(s_2(t)-a)^2}\right)\tilde{u_1}_{y}(t, 1)\tilde{u}(t, 1)+\frac{k}{(s_2(t)-a)^2}(\tilde{u}_y(t, 1)\tilde{u}(t, 1)\\
\end{align*}
By using (3.13) and (A4), the following inequalitis hold:  
\begin{align*}
|I_{21}| & \leq \frac{a_0C_e}{s_1(t)-a}|\sigma(\tilde{u}_1(t, 1))-\varphi (s_1(t))||\tilde{u}(t)|_{H^1(0, 1)}|\tilde{u}(t)|_{L^2(0, 1)}\\
|I_{22}| & \leq \frac{a_0}{s_1(t)-a}\sigma(\tilde{u}_2(t, 1))\biggl(|\tilde{u}(t, 1)|^2 + |\varphi (s_1(t))-\varphi (s_2(t))|\tilde{u}(t, 1)|\biggr)\\
& \leq \frac{a_0C_e}{s_1(t)-a}\sigma(\tilde{u}_2(t, 1))|\tilde{u}(t)|_{H^1(0, 1)}|\tilde{u}(t)|_{L^2(0, 1)}\\ 
& + \frac{a^2_0C_e}{2(s_1(t)-a)^2}(\sigma(\tilde{u}_2(t, 1))^2|\tilde{u}(t)|_{H^1(0, 1)}|\tilde{u}(t)|_{L^2(0, 1)} + \frac{c^2_{\varphi}}{2}|s(t)|^2\\
|I_{23}| & = \biggl(\frac{s(t)}{(s_1(t)-a)(s_2(t)-a)}\biggr)a_0\sigma(\tilde{u}_2(t, 1))(\sigma(\tilde{u}_2(t, 1))-\varphi (s_2(t))\tilde{u}(t, 1)\\
& \leq \frac{C_e\biggl(a_0\sigma(\tilde{u}_2(t, 1))(\sigma(\tilde{u}_2(t, 1))-\varphi (s_2(t))\biggr)^2}{2\delta^2(s_1(t)-a)^2}|\tilde{u}(t)|_{H^1(0, 1)}|\tilde{u}(t)|_{L^2(0, 1)} + \frac{1}{2}|s(t)|^2. \\
\end{align*}
Accordingly, by adding the above three estimates, we obtain: 
\begin{align*}
& \sum_{k=1}^3|I_{2k}| \\
\leq & \left(\frac{L_1(t)}{s_1(t)-a} + \frac{L_2(t)}{(s_1(t)-a)^2}\right) |\tilde{u}(t)|_{H^1(0, 1)}|\tilde{u}(t)|_{L^2(0, 1)} + \frac{(c^2_{\varphi}+1)}{2}|s(t)|^2 \mbox{ for }t\in [0, T_0], 
\tag{4.6}
\end{align*}
where $L_1(t)=a_0C_e(|\tilde{u}_1(t, 1)|+\varphi(a)+c_{\varphi})+ a_0C_e(|\tilde{u}_2(t, 1)|+\varphi(a))$ and $L_2(t) = a^2_0C_e(|\tilde{u}_2(t, 1)|^2+\varphi^2(a)) + C_e(a^2_0(|\tilde{u}_1(t, 1)|+\varphi(a))^4)/2\delta^2 $. As for $I_2$, we split the term $I_3$ as follows:  
\begin{align*}
& \left (\frac{k}{(s_1(t)-a)^2}\tilde{u}_{1y}(t, 0)-\frac{k}{(s_2(t)-a)^2}\tilde{u}_{2y}(t, 0)\right)\tilde{u}(t, 0)\\
=&-\left(\frac{1}{s_1(t)-a}\beta(h(t)-H\tilde{u}_1(t, 0))-\frac{1}{s_2(t)-a}\beta(h(t)-H\tilde{u}_2(t, 0))\right)\tilde{u}(t, 0)\\
= & -\frac{1}{s_1(t)-a}\biggl(\beta(h(t)-H\tilde{u}_1(t, 0))-\beta(h(t)-H\tilde{u}_2(t, 0))\biggr)\tilde{u}(t, 0)\\
& -\left(\frac{1}{s_1(t)-a}-\frac{1}{s_2(t)-a}\right)\beta(h(t)-H\tilde{u}_2(t, 0))\tilde{u}(t, 0)\\
 =:&  I_{31}+I_{32}.
\end{align*}
Then, by using (3.13) and (A3), we notice that  
\begin{align*}
& \sum_{k=1}^2|I_{3k}|\\
\leq & \left(\frac{c_{\beta}C_eH}{s_1(t)-a} + \frac{c^2_{\beta}C_e}{2\delta^2(s_1(t)-a)^2}\right)|\tilde{u}(t)|_{H^1(0, 1)}|\tilde{u}(t)|_{L^2(0, 1)} + \frac{1}{2}|s(t)|^2 \mbox{ for }t\in [0, T_0]. 
\tag{4.7}
\end{align*}
What concerns the right-hand side of (4.4), we obtain that  
\begin{align*}
& \int_0^1 \left(\frac{y s_{1t}(t)}{s_1(t)-a} \tilde{u}_{1y}(t)-\frac{y s_{2t}(t)}{s_2(t)-a} \tilde{u}_{2y}(t)\right)\tilde{u}(t)dy\\
=& \int_0^1 \frac{y s_{1t}(t)}{s_1(t)-a} \tilde{u}_y(t)\tilde{u}(t)dy+\int_0^1 \frac{ys_t(t)}{s_1(t)-a} \tilde{u}_{2y}(t)\tilde{u}(t)dy\\
& +\int_0^1 \left(\frac{1}{s_1(t)-a} -\frac{1}{s_2(t)-a} \right)ys_{2t}(t)\tilde{u}_{2y}(t)\tilde{u}(t)dy,
\end{align*}
while the three terms are controlled from above in the following way:
\begin{align*}
I_{41} &\leq \frac{\eta k}{2(s_1(t)-a)^2}|\tilde{u}_y(t)|^2_{L^2(0, 1)} + \frac{1}{2\eta k}|s_{1t}(t)|^2|\tilde{u}(t)|^2_{L^2(0, 1)}, \\
I_{42} &\leq \frac{1}{2\delta}\biggl(|s_t(t)|^2+|\tilde{u}_{2y}(t)|^2_{L^2(0, 1)}|\tilde{u}(t)|^2_{L^2(0, 1)}\biggr), \\
I_{43} &\leq \frac{1}{2\delta^2}\biggl(|s(t)|^2|\tilde{u}_2(t)|^2_{L^2(0, 1)}+|s_{2t}(t)|^2|\tilde{u}(t)|^2_{L^2(0, 1)}\biggr), 
\end{align*}
%where $\eta$ is an arbitrary positive constant; here $C_6$, $C_7$, and $C_8$ are positive constants depending on the choice of $\eta$. 
Then, by (4.6) and (4.7) we have 
\begin{align*}
& \frac{1}{2}\frac{d}{dt}|\tilde{u}(t)|^2_{L^2(0, 1)}+(1-\eta)\frac{k}{(s_1(t)-a)^2}|\tilde{u}_y(t)|^2_H\\
\leq & \left(L_1(t) + c_{\beta}C_eH\right) \frac{1}{s_1(t)-a}|\tilde{u}(t)|_{H^1(0, 1)}|\tilde{u}(t)|_{L^2(0, 1)}\\
& + \biggl(L_2(t) + \frac{c^2_{\beta}C_e}{2\delta^2}\biggr) \frac{1}{(s_1(t)-a)^2}|\tilde{u}(t)|_{H^1(0, 1)}|\tilde{u}(t)|_{L^2(0, 1)}\\
& + \left(\frac{1}{2\eta k}|s_{1t}(t)|^2 +  \frac{1}{2\delta}|\tilde{u}_{2y}(t)|^2_{L^2(0, 1)} + \frac{1}{2\delta^2}|s_{2t}(t)|^2 \right)|\tilde{u}(t)|^2_{L^2(0, 1)}\\
& + \left( \frac{c^2_{\varphi}}{2} + 1+ \frac{1}{2\delta^2}|\tilde{u}_2(t)|^2_{L^2(0, 1)} + \frac{k}{2\eta}\left(\frac{2L^2}{\delta^3}\right)^2|\tilde{u}_{2y}|^2_{L^2(0, 1)} \right)|s(t)|^2 + \frac{1}{2\delta}|s_t(t)|^2. 
\tag{4.8}
\end{align*}
Young's inequality together with (3.13) ensure
\begin{align*}
& \left(L_1(t) + c_{\beta}C_eH \right) \frac{1}{s_1(t)-a}|\tilde{u}(t)|_{H^1(0, 1)}|\tilde{u}(t)|_{L^2(0, 1)}\\
\leq & \left(L_1(t) + c_{\beta}C_eH \right)\frac{1}{s_1(t)-a}\biggl(|\tilde{u}_y(t)|_{L^2(0, 1)}|\tilde{u}(t)|_{L^2(0, 1)} + |\tilde{u}(t)|^2_{L^2(0, 1)}\biggr)\\
\leq & \left(L_1(t) + c_{\beta}C_eH \right)\left(\frac{\eta k}{2(s_1(t)-a)^2}|\tilde{u}_y(t)|^2_{L^2(0, 1)} + (\frac{1}{2\eta k} + \frac{1}{\delta})|\tilde{u}(t)|^2_{L^2(0, 1)} \right)
\end{align*}
and 
\begin{align*}
& \left(L_2(t) + \frac{c^2_{\beta}C_e}{2\delta^2}\right) \frac{1}{(s_1(t)-a)^2}|\tilde{u}(t)|_{H^1(0, 1)}|\tilde{u}(t)|_{L^2(0, 1)} \\
\leq & \left(L_2(t) + \frac{c^2_{\beta}C_e}{2\delta^2}\right) \frac{1}{(s_1(t)-a)^2}(|\tilde{u}_y(t)|_{L^2(0, 1)}|\tilde{u}(t)|_{L^2(0, 1)} + |\tilde{u}(t)|^2_{L^2(0, 1)})\\
\leq &  \left(L_2(t) + \frac{c^2_{\beta}C_e}{2\delta^2}\right) \frac{1}{(s_1(t)-a)^2}\frac{\eta k}{2}|\tilde{u}_y(t)|^2_{L^2(0, 1)} + \frac
{1}{\delta^2}(\frac{1}{2\eta k} + 1)|\tilde{u}(t)|^2_{L^2(0, 1)}, 
\end{align*}
Here, by (3.13) and Lemma \ref{lem5}, we have that 
\begin{align*}
|\tilde{u}_i(t, 1)|^2 & \leq C_e (|\tilde{u}_{iy}(t)|_{L^2(0, 1)}|\tilde{u}_i(t)|_{L^2(0, 1)} + |\tilde{u}_i(t)|^2_{L^2(0, 1)}) \\
& \leq 2C_eC^2 \mbox{ for }t\in [0, T_0], 
\tag{4.9}
\end{align*}
where $C$ is the same constant as in Lemma \ref{lem5}. Then, by (4.9) we notice that $L_1$ and $L_2$ are bounded in $L^{\infty}(0, T)$. 
Accordingly, by applying these results to (4.8) and taking a suitable $\eta=\eta_0$, we have 
\begin{align*}
& \frac{1}{2}\frac{d}{dt}|\tilde{u}(t)|^2_{L^2(0, 1)}+\frac{1}{2}\frac{k}{(s_1(t)-a)^2}|\tilde{u}_y(t)|^2_{L^2(0, 1)}\\
\leq & \left(L_1(t) + c_{\beta}C_eH\right) \left(\frac{1}{2\eta_0 k} + \frac{1}{\delta} \right)|\tilde{u}(t)|^2_{L^2(0, 1)} \\
& + \left(L_2(t) + \frac{c^2_{\beta}C_e}{2\delta^2}\right)\frac{1}{\delta^2}\left(\frac{1}{2\eta_0 k} + 1\right)|\tilde{u}(t)|^2_{L^2(0, 1)}\\
& + \left(\frac{1}{2\eta_0 k}|s_{1t}(t)|^2 +  \frac{1}{2\delta}|\tilde{u}_{2y}(t)|^2_{L^2(0, 1)} + \frac{1}{2\delta^2}|s_{2t}(t)|^2 \right)|\tilde{u}(t)|^2_{L^2(0, 1)}\\
& + \left(\frac{c^2_{\varphi}}{2} +1 + \frac{1}{2\delta^2}|\tilde{u}_2(t)|^2_{L^2(0, 1)} + \frac{k}{2\eta_0}\left(\frac{2L^2}{\delta^3}\right)^2|\tilde{u}_{2y}(t)|^2_{L^2(0, 1)}\right)|s(t)|^2 + \frac{1}{2\delta}|s_t(t)|^2. 
\tag{4.10}
\end{align*}
Now, we put the summation of all coefficient of $|\tilde{u}|^2_{L^2(0, 1)}$ by $L_3(t)$ for $t\in [0, T_0]$, and take 
$L_4(t)=c^2_{\varphi}/2+ 1+ |\tilde{u}_2(t)|^2_{L^2(0, 1)}/2\delta^2 + k(4L^4|\tilde{u}_{2y}(t)|^2_{L^2(0, 1)})/2\eta_0\delta^6+ 1/2\delta$. Then, we have  
\begin{align*}
& \frac{1}{2}\frac{d}{dt}|\tilde{u}(t)|^2_{L^2(0, 1)}+\frac{1}{2}\frac{k}{(s_1(t)-a)^2}\int_0^1|\tilde{u}_y(\tau)|^2_{L^2(0, 1)} d\tau \\
\leq & L_3(t)|\tilde{u}(t)|^2_{L^2(0, 1)} + L_4(t)(|s(t)|^2 + |s_t(t)|^2). 
\tag{4.11} \mbox{ for }t\in [0, T_0].
\end{align*}
Here, using Lemma \ref{lem5}, (4.2) and $s_i\in M_K(T_0)$ for $i=1, 2$ we see that $L_3\in L^1(0, T_0)$ and $L_4\in L^{\infty}(0, T_0)$. 
Therefore, Gronwall's inequality guarantees that 
\begin{align*}
& \frac{1}{2}|\tilde{u}(t)|^2_{L^2(0, 1)}+\frac{1}{2}\frac{k}{(s_1(t)-a)^2}\int_0^t|\tilde{u}_y(\tau)|^2_{L^2(0, 1)} d\tau \\ 
\leq & \left( |L_4|_{L^{\infty}(0, T_0)}|s|^2_{W^{1,2}(0, T)} \right) e^{\int_0^t L_3(\tau)d\tau} \mbox{ for }t\in [0, T_0]. 
\tag{4.12}
\end{align*}  
By using (4.12) we show that there exists $T^*<T_0$ such that $\Gamma_{T^*}$ is a contraction mapping on the closed subset of $M_K(T^*)$. 
To do so, from the subtraction of the time derivatives of $\Gamma_{T_0}(s_1)$ and $\Gamma_{T_0}(s_2)$ and relying on (3.13) and (4.12), we have for $T_1<T_0$ the following estimate: 
\begin{align*}
& |(\Gamma_{T_1}(s_1))_t-(\Gamma_{T_1}(s_2))_t|_{L^2(0, T_1)}\\
=& a_0|\sigma(\tilde{u_1}(\cdot, 1))-\varphi(s_1(\cdot)) - \sigma((\tilde{u_2}(\cdot, 1))-\varphi(s_2(\cdot))|_{L^2(0, T_1)}\\ 
\leq & a_0 \biggl(|\tilde{u}_1(\cdot, 1)-\tilde{u}_2(\cdot, 1)|_{L^2(0, T_1)} + c_{\varphi}|s|_{L^2(0, T_1)} \biggr)\\
%\end{align*}
%In the above inequality, from Lemma \ref{lem5} we observe that 
%\begin{align*}
%& |\tilde{u}_i(t, 1)|^2\leq C_e (|\tilde{u}_{iy}|_{L^2(0, 1)}|\tilde{u}_i|_{L^2(0, 1)} + |\tilde{u}_i|^2_{L^2(0, 1)}) \leq 2C_eC^2 \mbox{ for }t\in [0, T_1]. 
%\tag{4.11}
%\end{align*}
%By using (4.8) and (4.11) we obtain that 
%\begin{align*}
%& |(\Gamma_{T_1}(s_1))_t-(\Gamma_{T_1}(s_2))_t|_{L^2(0, T_1)}\\
%\leq & C_9|s|_{L^2(0, t)} + C_{10}|\tilde{u}_1(\cdot, 1)-\tilde{u}_2(\cdot, 1)|_{L^2(0, T_1)}\\
\leq & a_0c_{\varphi}T_1|s_t|_{L^2(0, T_1)} + a_0\sqrt{C_e}\biggl(\int_0^{T_1}(|\tilde{u}_y|_{L^2(0, 1)} |\tilde{u}|_{L^2(0, 1)} + |\tilde{u}|^2_{L^2(0, 1)})dt \biggr)^{1/2}\\
\leq & a_0c_{\varphi}T_1|s_t|_{L^2(0, T_1)} \\
 & + C_3\biggl (\varepsilon|s|_{W^{1,2}(0, T_1)}+\frac{1}{\varepsilon}\sqrt{T_1}|s|_{W^{1,2}(0, T_1)} + \sqrt{T_1}|s|_{W^{1,2}(0, T_1)}\biggr),
\tag{4.13}
\end{align*}
where $C_3$ is a positive constant and $\varepsilon$ is an arbitrary positive number. 
We obtain 
\begin{align*}
& |\Gamma_{T_1}(s_1)-\Gamma_{T_1}(s_2)|_{L^2(0, T_1)}\\
\leq & T_1\biggl( a_0c_{\varphi}T_1|s_t|_{L^2(0, T_1)} + C_3\biggl (\varepsilon|s|_{W^{1,2}(0, T_1)}+(\frac{1}{\varepsilon}+1)\sqrt{T_1}|s|_{W^{1,2}(0, T_1)}\biggr)\biggr).
\tag{4.14}
\end{align*}
Therefore, by (4.13) and (4.14) and taking a sufficiently small number $\varepsilon$ we see that there exists $T^*<T_0$ such that $\Gamma_{T^*}$ is a contraction mapping on a closed subset of $M_K(T^*)$. 
\end{proof}

From Lemma \ref{lem6}, by applying Banach's fixed point theorem, there exists $s\in M_K(T^*)$, where $T^*$ is the same as in Lemma \ref{lem6} such that
$\Gamma_{T^*}(s)=s$. This implies that $(\mbox{P})_{\tilde{u}_0, s_0, h}$ has a unique solution $(s, \tilde{u})$ on $[0, T^*]$. Thus, we can prove the existence and uniqueness of a solution of $(\mbox{P})_{\tilde{u}_0, s_0, h}$ locally in time. This shows that  Theorem \ref{t2} holds. Finally, by introducing the variable 
\begin{align*}
& u(t, z)=\tilde{u}\left(t, \frac{z-a}{s(t)-a}\right) \mbox{ for } z\in [a, s(t)].
\tag{4.15}
\end{align*}
we see that a pair of the function $(s, u)$ is a solution of $(\mbox{P})_{u_0, s_0, h}$ on $[0, T^*]$. 
To prove Theorem \ref{t1} completely, we still must ensure  the boundedness of a solution to $(\mbox{P})_{u_0, s_0, h}$. 

\begin{lemma}
\label{lem7}
Let $T>0$, and $(s, u)$ be a solution of $(\mbox{P})_{u_0, s_0, h}$ on $[0, T]$. Then, $\varphi(a)\leq u(t)\leq |h|_{L^{\infty}(0, T)}H^{-1}$ on $[a, s(t)]$ for $t\in [0, T]$.
\end{lemma}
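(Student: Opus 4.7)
The plan is to prove both estimates by standard energy arguments on the moving domain: test against the cutoff $(\varphi(a)-u)^+$ for the lower bound and against $(u-M)^+$ with $M := |h|_{L^{\infty}(0,T)}H^{-1}$ for the upper bound, integrate over $[a,s(t)]$, then use the Leibniz rule for moving domains together with integration by parts. The crucial point in both cases is that, thanks to (A3)--(A5) and to the coupling between (1.3) and (1.4), the boundary contributions at $z=a$ and $z=s(t)$, combined with the Leibniz correction $\tfrac{1}{2}(\text{trace})^2 s_t(t)$, turn out to be nonpositive; together with the standard dissipation this forces the cutoff integrals to remain zero, since they vanish at $t=0$ by (A5).

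For the lower bound, I would set $W(t) := \tfrac{1}{2} \int_a^{s(t)} ((\varphi(a)-u)^+)^2\,dz$. Testing (1.1) by $(\varphi(a)-u)^+$, applying Leibniz, and integrating by parts yields a dissipation term $-k\int_a^{s(t)} |\partial_z(\varphi(a)-u)^+|^2\,dz \leq 0$ and boundary/Leibniz contributions. At $z=a$, the IBP term equals $-(\varphi(a)-u(t,a))^+ \beta(h(t)-Hu(t,a)) \leq 0$ because $\beta\geq 0$ by (A3). At $z=s(t)$, using $-ku_z(t,s(t)) = \sigma(u(t,s(t)))\,s_t(t)$ and $s_t(t) = a_0(\sigma(u(t,s(t)))-\varphi(s(t)))$ inherited from the AP fixed-point construction, observe that on the set $\{u(t,s(t))<\varphi(a)\}$ the truncation gives $\sigma(u(t,s(t)))=\varphi(a)$, while $\varphi(s(t))\geq \varphi(a)$ by the monotonicity of $\varphi$ in (A4); hence $s_t(t)\leq 0$. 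A short computation collapses the IBP boundary term and the Leibniz correction at $s(t)$ into
\begin{equation*}
s_t(t)\,(\varphi(a)-u(t,s(t)))^+ \left(\varphi(a) + \tfrac{1}{2}(\varphi(a)-u(t,s(t)))^+\right) \leq 0,
\end{equation*}
since the factor in parentheses is nonnegative. Thus $W'(t)\leq 0$; combined with $W(0)=0$ from (A5), this gives $u\geq \varphi(a)$ throughout $Q_s(T)$.

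Once the lower bound is in hand, $\sigma(u)=u$, and the flux condition recovers its original form $-ku_z(t,s(t))=u(t,s(t))s_t(t)$. For the upper bound, set $V(t) := \tfrac{1}{2}\int_a^{s(t)} ((u-M)^+)^2\,dz$ and repeat the scheme. At $z=a$, if $u(t,a)>M$, then $Hu(t,a)>|h|_{L^{\infty}(0,T)}\geq h(t)$, so $\beta(h(t)-Hu(t,a))=0$ by (A3) and the boundary term vanishes. At $z=s(t)$, if $u(t,s(t))>M$, then $u(t,s(t))>M\geq \sup\varphi\geq \varphi(s(t))$ by (A4), forcing $s_t(t)>0$; an algebraic manipulation shows that the IBP boundary term $-(u(t,s(t))-M)^+ u(t,s(t)) s_t(t)$ and the Leibniz term $\tfrac{1}{2}((u(t,s(t))-M)^+)^2 s_t(t)$ collapse to $-\tfrac{1}{2}\,s_t(t)\,(u(t,s(t))^2 - M^2) \leq 0$. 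Hence $V'(t)\leq 0$, and $V(0)=0$ by (A5) yields $u\leq M$.

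The main obstacle is the sign analysis at the free boundary $z=s(t)$: this is not a standard Hopf-type consequence but depends on the precise quadratic cancellation produced by the specific coupling of (1.3) and (1.4). For the lower bound it is precisely the $\sigma$-truncation, inherited from the auxiliary problem used to construct the fixed point, that prevents this cancellation from being spoiled in the otherwise unruly regime where $u(t,s(t))$ could a priori be negative.
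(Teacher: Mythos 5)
Your argument is correct, and for the lower bound it is essentially the paper's own proof: same test function $[\varphi(a)-u]^{+}$, same use of $\beta\geq 0$ at $z=a$, and the same observation that on the set $\{u(t,s(t))<\varphi(a)\}$ the truncation forces $\sigma(u(t,s(t)))=\varphi(a)\leq\varphi(s(t))$, hence $s_t\leq 0$ and the combined boundary/Leibniz contribution at $s(t)$ is nonpositive. For the upper bound, however, you take a genuinely cleaner route. The paper does \emph{not} combine the flux term at $s(t)$ with the Leibniz correction: it keeps $\tfrac{s_t}{2}|[u(t,s(t))-M]^{+}|^{2}$ on the right-hand side of (4.22), which obliges it first to prove $s_t\in L^{2}(0,T)$ through a separate energy estimate ((4.17)--(4.20)) and then to absorb that term via the Sobolev embedding (3.13) and Gronwall's inequality. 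Your cancellation
\begin{equation*}
-(u(t,s(t))-M)^{+}\,u(t,s(t))\,s_t(t)+\tfrac{1}{2}\bigl((u(t,s(t))-M)^{+}\bigr)^{2}s_t(t)
=-\tfrac{1}{2}\,s_t(t)\bigl(u(t,s(t))^{2}-M^{2}\bigr)\leq 0,
\end{equation*}
valid because $s_t=a_0(u(t,s(t))-\varphi(s(t)))\geq a_0(M-\sup\varphi)\geq 0$ wherever $u(t,s(t))>M$ by (A4), makes the truncated energy monotone outright and eliminates both the intermediate $s_t\in L^{2}(0,T)$ step and the Gronwall argument. One caveat that both proofs share: the sign analysis at $s(t)$ for the lower bound relies on the $\sigma$-truncated boundary conditions inherited from the auxiliary problem, not on (1.3)--(1.4) as literally written in Definition 2.1 (for an arbitrary solution with $u(t,s(t))$ very negative the cancellation could fail); you state this dependence explicitly, whereas the paper uses it silently.
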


\begin{proof}
First, from (1.1), we have 
\begin{align*}
& \frac{1}{2}\frac{d}{dt}\int_a^{s(t)}|[-u(t)+\varphi(a)]^{+}|^2 dz -\frac{s_t}{2}|[-u(t, s(t))+\varphi(a)]^+|^2 \\
&  + k\int_a^{s(t)} u_{zz}(t)[-u(t)+\varphi(a)]^+ dz =0 \mbox{ for a.e.}t\in [0, T].
\tag{4.15}
\end{align*}
By $a<s$ on $[0, T]$ and $\varphi' \geq 0$ in (A4), we note that $\varphi(s(t))-\varphi(a) >0$ on $[0, T]$. 
Hence, for the second term in the left hand side, if $u(t, s(t))<\varphi(a)$, then $-\sigma(u(t, s(t)))+\varphi(s(t))=-\varphi(a)+\varphi(s(t))>0$ so that 
\begin{align*}
-\frac{s_t}{2}|[-u(t, s(t))+\varphi(a)]^+|^2 & = \frac{a_0}{2}(-\sigma(u(t, s(t)))+\varphi(s(t))|[-u(t, s(t))+\varphi(a)]^+|^2 \geq 0.
\end{align*}
Also, by the boundary conditions (1.2) and (1.3) it follows that 
\begin{align*}
& ku_z(t, s(t))[-u(t, s(t))+\varphi(a)]^+ \\
= & -\sigma(u(t, s(t)))s_t(t)[-u(t, s(t))+\varphi(a)]^+ \\
= & a_0\sigma(u(t, s(t))(-\sigma(u(t, s(t))+\varphi(s(t)))[-u(t, s(t))+\varphi(a)]^+ \\
%\geq & \sigma(u(t, s(t))|[-\sigma(u(t. s(t))+\varphi(a)]^+|^2
\end{align*}
and 
\begin{align*}
&-ku_z(t, a)[-u(t, a)+\varphi(s(t))]^+ =\beta(h(t)-Hu(t, a))[-u(t, a)+\varphi(s(t))]^+. 
\end{align*}
Since $\sigma\geq 0$, $\varphi(s(t))-\varphi(a)>0$ and $\beta \geq 0$ we note that both expressions are positive. Therefore, we obtain that 
\begin{align*}
& \frac{d}{dt}\int_a^{s(t)}|[-u(t)+\varphi(a)]^{+}|^2 dz + k\int_a^{s(t)}|[-u(t)+\varphi(a)]_z^+|^2 dz \leq 0 \mbox{ for a.e. }t\in [0, T].
\tag{4.16}
\end{align*}
%Here, by Sobolev's embeddings in the one dimensional case, we see that the next inequality holds 
%\begin{align*}
%& |s_t(t)|^2|[-u(t, s(t))+\varphi(a)]^+|^2 \\
%\leq & \frac{C^2_e\delta}{2}|[-u(t)+\varphi(a)]^+_z|^2_{L^2(a, s(t))} + \left(\frac{1}{2\delta}+1\right)|s_t(t)|^2|[-u(t)+\varphi(a)]^+
%^2_{L^2(a, s(t))},
%\end{align*}
%where $C_e$ is the positive constant arising in Sobolev's embedding theorem and $\delta$ is an arbitrary positive number.
%Then, (4.19) becomes 
%\begin{align*}
%& \frac{d}{dt}\int_a^{s(t)}|[-u(t)+\varphi(a)]^+|^2 dz + \left(1-\frac{C^2_e\delta}{2}\right)\int_a^{s(t)}|[-u(t)+\varphi(a)]_z^+|^2 dz \\
%\leq & \left(\frac{1}{2\delta}+1\right)|s_t(t)|^2|[-u(t)+\varphi(s(t))]^+|^2_{L^2(a, s(t))} \mbox{ for }t\in [0, T].
%\tag{4.20}
%\end{align*}
%Now, we take $\delta=\delta_0$ and integrate over $[0, T]$ after multiplying (4.17) by $|s_t|^2$. This way, we obtain 
%\begin{align*}
%& |s_t(t)|^2|[-u(t)+\varphi(a)]^+|^2_{L^2(a, s(t))} \\
%\leq & \left(\frac{1}{2\delta_0}+1\right)|s_t(t)|^2\int_0^t|s_t(\tau)|^2|[-u(\tau)+\varphi(a)]^+|^2_{L^2(a, s(\tau))}d\tau
%\mbox{ for }t\in [0, T]. 
%\tag{4.21}
%\end{align*}
Integrating (4.16) over $[0, T]$, we see that $|[-u(t)+\varphi(a)]^+|^2_{L^2(a, s(t))}=0$ for $t\in [0, T]$
%By applying this result to the integration of (4.17), we conclude that $|[-u(t)+\varphi(a)]^+|^2_{L^2(a, s(t))}=0$ for a.e. $t\in [0, T]$
which implies $u(t)\geq \varphi(a)$ on $[a, s(t)]$ for $t\in [0, T]$.
Next, we show that $u(t) \leq |h|_{L^{\infty}(0, T)}H^{-1}$ on $[a, s(t)]$ for $t\in [0, T]$. From (1.1), we first obtain 
\begin{align*}
& \frac{1}{2}\frac{d}{dt}|u(t)|^2_{L^2(a, s(t))} +\frac{1}{2}s_t(t)|u(t, s(t))|^2\\
& + k\int_a^{s(t)}|u_z(t)|^2dz -\beta(h(t)-Hu(t, a))u(t, a)=0 \mbox{ for a.e. }t\in [0, T].
\tag{4.17}
\end{align*}
Here, by $u(t, s(t))=\frac{s_t(t)}{a_0}+\varphi(s(t))$ and $u(t, s(t))\geq \varphi(a)$ on $[0, T]$ it holds that 
\begin{align*}
\frac{s_t(t)}{2}|u(t, s(t))|^2 & = \frac{1}{2}\left(\frac{|s_t(t)|^2}{a_0} + \varphi(s(t))s_t(t)\right)u(t, s(t))\\
& \geq \frac{\varphi(a)}{2a_0}|s_t(t)|^2-\frac{c_{\varphi}}{2}|s_t(t)|u(t, s(t))\\
& \geq \frac{\varphi(a)}{4a_0}|s_t(t)|^2-\frac{a_0c^2_{\varphi}}{4\varphi(a)}u^2(t, s(t))
\end{align*}
and 
\begin{align*}
 -\beta(h(t)-Hu(t, a))u(t, a)&=\beta(h(t)-Hu(t, a)\frac{h(t)-Hu(t, a)}{H} -\beta(h(t)-Hu(t, a)\frac{h(t)}{H}\\
& \geq -\beta(h(t)-Hu(t, a)\frac{h(t)}{H}.
\end{align*}
Hence, the above two results and (4.17) leads to 
\begin{align*}
& \frac{1}{2}\frac{d}{dt}|u(t)|^2_{L^2(a, s(t))} +\frac{\varphi(a)}{4a_0}|s_t(t)|^2 + k\int_a^{s(t)}|u_z(t)|^2dz \\
& \leq \frac{a_0c^2_{\varphi}}{4\varphi(a)}u^2(t, s(t))+ \beta(h(t)-Hu(t, a))\frac{h(t)}{H} \mbox{ for a.e. }t\in [0, T].
\tag{4.18}
\end{align*}
By Sobolev's embedding theorem in one dimension, it follows that 
\begin{align*}
&\frac{a_0c^2_{\varphi}}{4\varphi(a)}u^2(t, s(t)) \leq \frac{a_0c^2_{\varphi}}{4\varphi(a)} C'_e|u(t)|_{H^1(a, s(t))}|u(t)|_{L^2(a, s(t))}\\
& \leq \frac{a_0c^2_{\varphi}C'_e}{4\varphi(a)}(|u_z(t)|_{L^2(a, s(t))}|u(t)|_{L^2(a, s(t))}+|u(t)|^2_{L^2(a, s(t))})\\
& \leq \frac{k}{2}|u_z(t)|^2_{L^2(a, s(t))} + \left( \frac{1}{2k}\left(\frac{a_0c^2_{\varphi}C'_e}{4\varphi(a)}\right)^2+\frac{a_0c^2_{\varphi}C'_e}{4\varphi(a)}\right)|u(t)|^2_{L^2(a, s(t))},
\tag{4.19}
\end{align*}
where $C'_e$ is a positive constant in Sobolev's embedding theorem. Therefore, by (4.19), (4.18) becomes 
\begin{align*}
& \frac{1}{2}\frac{d}{dt}|u(t)|^2_{L^2(a, s(t))} +\frac{\varphi(a)}{4a_0}|s_t(t)|^2 + \frac{k}{2}\int_a^{s(t)}|u_z(t)|^2dz \\
& \leq \left( \frac{1}{2k} \left(\frac{a_0c^2_{\varphi}C'_e}{4\varphi(a)}\right)^2+ \frac{a_0c^2_{\varphi}C'_e}{4\varphi(a)}\right)|u(t)|^2_{L^2(a, s(t))} + c_{\beta}\frac{|h|_{L^{\infty}(0, T)}}{H}.
\tag{4.20}
\end{align*}
Integrating (4.20) over $[0, T]$ we see that $s_t\in L^2(0, T)$. 
Now, using a similar argument as in the proof for the lower bound and $\sigma(u(t, s(t))=u(t, s(t))$ we have that 
\begin{align*}
& \frac{1}{2}\frac{d}{dt}\int_a^{s(t)}|[u(t)-|h|_{L^{\infty}(0, T)}H^{-1}]^{+}|^2 dz -\frac{s_t}{2}|[u(t, s(t))-|h|_{L^{\infty}(0, T)}H^{-1}]^+|^2 \\
&  -k\int_a^{s(t)} u_{zz}(t)[u(t)-|h|_{L^{\infty}(0, T)}H^{-1}]^+ dz =0 \mbox{ for a.e. }t\in[0, T].
\tag{4.21}
\end{align*}
By noting from $\sup_{r\in \mathbb{R}}\varphi(r)\leq |h|_{L^{\infty}(0, T)}H^{-1}$ in (A4) that 
\begin{align*}
& -ku_z(t, s(t))[u(t, s(t))-|h|_{L^{\infty}(0, T)}H^{-1}]^+ \\ 
= & u(t, s(t))s_t[u(t, s(t))-|h|_{L^{\infty}(0, T)}H^{-1}]^+ \\
=& a_0u(t, s(t))(u(t, s(t))-\varphi(s(t)))[u(t, s(t))-|h|_{L^{\infty}(0, T)}H^{-1}]^+ \\
\geq & a_0|h|_{L^{\infty}(0, T)}H^{-1}(|h|_{L^{\infty}(0, T)}H^{-1}-\sup_{r\in \mathbb{R}}\varphi(r))[u(t, s(t))-|h|_{L^{\infty}(0, T)}H^{-1}]^+ \geq 0, 
\end{align*}
and 
\begin{align*}
& ku_z(t, a)[u(t, a)-|h|_{L^{\infty}(0, T)}H^{-1}]^+ \\ 
= & -\beta(h(t)-Hu(t, a))[u(t, a)-|h|_{L^{\infty}(0, T)}H^{-1}]^+ =0, 
\end{align*}
we can write (4.21) as follows:
\begin{align*}
& \frac{1}{2}\frac{d}{dt}\int_a^{s(t)}|[u(t)-|h|_{L^{\infty}(0, T)}H^{-1}]^{+}|^2 dz + k\int_a^{s(t)} |[u(t)-|h|_{L^{\infty}(0, T)}H^{-1}]^+_z|^2 dz\\
& \leq \frac{s_t(t)}{2}|[u(t, s(t))-|h|_{L^{\infty}(0, T)}H^{-1}]^+|^2 \mbox{ for a.e. }t\in[0, T].
\tag{4.22}
\end{align*}
Similarly to (4.19), we obtain 
\begin{align*}
& \frac{s_t(t)}{2}|[u(t, s(t))-|h|_{L^{\infty}(0, T)}H^{-1}]^+|^2 \\
\leq & \frac{s_t(t)C'_e}{2}(|U_z(t)|_{L^2(a, s(t))}|U(t)|_{L^2(a, s(t))}+|U(t)|^2_{L^2(a, s(t))})\\
\leq & \frac{k}{2}|U_z(t)|^2_{L^2(a, s(t))} + \left(\frac{1}{2k} \left(\frac{s_t(t)C'_e}{2}\right)^2 + \frac{s_t(t)C'_e}{2}\right)|U(t)|^2_{L^2(a, s(t))},
\end{align*}
where $U(t, z)=[u(t, z)-|h|_{L^{\infty}(0, T)}H^{-1}]^+$ for $(t, z)\in Q_s(T)$. 
We put the coefficient of $|u(t)|^2_{L^2(a, s(t))}$ by $G(t)$. Then, $s_t\in L^2(0, T)$ so that we see that $G\in L^1(0, T)$. Finally, by applying the above to (4.22) and using Gronwall's inequality we get 
\begin{align*}
& \frac{1}{2}|[u(t)-|h|_{L^{\infty}(0, T)}H^{-1}]^{+}|^2_{L^2(a, s(t))}+ \frac{k}{2}\int_0^t|[u(t)-|h|_{L^{\infty}(0, T)}H^{-1}]^+_z|^2_{L^2(a, s(t))}dt\\
& \leq \left(\frac{1}{2}|[u_0-|h|_{L^{\infty}(0, T)}H^{-1}]^{+}|^2_{L^2(a, s_0)} \right) e^{\int_0^tG(\tau)d\tau}=0 \mbox{ for } t\in [0, T].
\end{align*}
This means that $u(t) \leq |h|_{L^{\infty}(0, T)}H^{-1}$ on $[a, s(t)]$ for $t\in [0, T]$.  Thus, Theorem \ref{t1} is finally proven. 
\end{proof}

%For acknowledgements section, please don't number the section, please begin it with \section*{Acknowledgements}
\section*{Acknowledgments} The authors thank T. Aiki (Tokyo) for fruitful discussions. KK is supported by Grant-in-Aid No.16K17636, JSPS.


\begin{thebibliography}{99}
\bibitem{AMSS}
T.~Aiki, Y.~Murase, N.~Sato, K.~Shirakawa, A mathematical model for a
hysteresis appearing in adsorption phenomena, S\=urikaisekikenky\=usho
K\=oky\=uroku, vol. 1856 (2013), 1--12.

\bibitem{AI-Mur}
T.~Aiki, Y.~Murase, On a large time behavior of a solution to a one-dimensional
free boundary problem for adsorption phenomena, J. Math. Anal. Appl.,
vol. 445 (2017), 837--854.

\bibitem{AM}
T.~Aiki, A.~Muntean. 
Existence and uniqueness of solutions to a mathematical model predicting sevice life of concrete structures, 
Adv. Math. Sci. Appl., vol. 19 (2009), 109--129. 

\bibitem{AM1}
T.~Aiki, A.~Muntean.
Large time behavior of solutions to concrete carbonation problem, 
Communications on Pure and Applied Analysis, vol. 9 (2010), 1117--1129

\bibitem{AM2}
T.~Aiki, A.~Muntean.
A free-boundary problem for concrete carbonation : Rigorous justification of $\sqrt{t}$--law of propagation, 
Interfaces and Free Boundaries, vol. 15 (2013), 167--180. 


%\bibitem{FM}
%A.~Fasano, A.~Mikelic, 
%The 3D flow of a liquid through a porous medium with adsorbing and swelling granules, 
%Interfaces and Free Boundaries, vol. 4 (2002), 239--261. 

\bibitem{FMP}
A. Fasano, G. Meyer, M. Primicerio, On a problem in the polymer industry: theoretical and numerical investigation of swelling, SIAM J. Appl. Math. vol. 17 (1986), 945--960.

\bibitem{FM}
A.~Fasano, A.~Mikelic, 
The 3D flow of a liquid through a porous medium with adsorbing and swelling granules, 
Interfaces and Free Boundaries, vol. 4 (2002), 239--261. 

\bibitem{FMA}
T.~Fatima, A.~Muntean, T.~Aiki. Distributed space scales in a semilinear
reaction-diffusion system including a parabolic variational inequality: A
well-posedness study, Adv. Math. Sci. Appl., vol. 22 (2012), 295--318.

\bibitem{FT}
A.~Friedman, A.~Tzavaras, A quasilinear parabolic system arising in modelling
of catalytic reactors. J. Differential Equations, vol. 70 (1987), 167--196.

\bibitem{Kenmochi}
N.~Kenmochi, Solvability of nonlinear evolution equations with time-dependent
  constraints and applications, Bull. Fac. Education, Chiba Univ., vol. 30 (1981), 1--87.

\bibitem{KASM}
K.~Kumazaki, T.~Aiki, N.~Sato, Y.~Murase, 
Multiscale model for moisture transport with adsorption phenomenon in concrete materials, 
Applicable Analysis, vol. 97 (2017), 41--54. 

\bibitem{MB}
A.~Muntean, M.~B{\"o}hm.
A moving boundary problem for concrete carbonation: global existence and uniqueness of solutions.
J. Math. Anal. Appl., vol. 350 (2009), no.1, 234--251.

\bibitem{MN}
A.~Muntean, M.~Neuss-Radu, A multiscale {G}alerkin approach for a class of
  nonlinear coupled reaction-diffusion systems in complex media, J. Math. Anal.
  Appl., vol. 37 (2010), 705--718.

\bibitem{SAMS}
N.~Sato, T.~Aiki, Y.~Murase, K.~Shirakawa, A one dimensional free boundary
  problem for adsorption phenomena, Netw. Heterog. Media, vol. 9 (2014), 655--668.

\bibitem{Evaporation}
B.~W.~van de Fliert, R.~van der Hout, 
A generalized Stefan problem in a diffusion model with evaporation, 
SIAM J. Appl. Math., vol. 60 (2000), no. 4, 1128--1136. 

\bibitem{disso-precipi}
T.~L.~van Noorden, I.~ S.~Pop, 
A Stefan problem modelling crystal dissolution and precipitation, 
IMA J. Appl. Math., 2007, 1--19. 

\bibitem{Helmig}
T. L. van Noorden  I. S. Pop  A. Ebigbo  R. Helmig, An upscaled model for biofilm growth in a thin strip, Water Resources Reserach, vol. 46 (2010), 1--14.

\bibitem{Setzer}
M. J. Setzer, Micro-ice-lens formation in porous solid,
Journal of Colloid and Interface Science,
vol. 243 (2001), no.  1, 193-201,

\bibitem{Xie}
X. Weiqing, The {S}tefan problem with a kinetic condition at the free boundary, 
SIAM J. Math. Anal. vol. 21 (1990), no. 2, 362--373. 

\bibitem{Z}
M.~Zaal, Cell swelling by osmosis: a variational approach,  
Interfaces and Free Boundaries, vol. 14 (2012), 487--520. 

\end{thebibliography}
\end{document}